\documentclass[11pt]{amsart} 
\usepackage{amsmath,amssymb,amsfonts,amsthm,amscd}
\usepackage{graphics,latexsym,multicol,euscript}
\usepackage{graphicx}
\usepackage{flafter}
\usepackage{hyperref}
\usepackage{tikz-cd}
\usepackage{booktabs}

\makeatletter

\def\jobis#1{FF\fi
  \def\preedicate{#1}%
  \edef\preedicate{\expandafter\strip@prefix\meaning\preedicate}%
  \edef\job{\jobname}%
  \ifx\job\preedicate
}

\makeatother

\if\jobis{proposal}%
 \def\try{subsection}%
\else
  \def\try{section}%
\fi

\numberwithin{equation}{section}
 
\theoremstyle{plain}
\newtheorem{theorem}{Theorem}[\try]
\newtheorem{corollary}[theorem]{Corollary}
\newtheorem{lemma}[theorem]{Lemma}

\newtheorem{example}[theorem]{Example}
\newtheorem{proposition}[theorem]{Proposition}
\newtheorem{definition-lemma}[theorem]{Definition-Lemma}

\newtheorem{definition}[theorem]{Definition}
\newtheorem{remark}[theorem]{Remark}

\newtheorem{notation}[theorem]{Notation}

\newcommand{\maxid}{\mathfrak{m}}

\newcommand{\idvar}{\mathfrak{a}}

\newcommand{\rk}{\operatorname{rank}}

\newcommand{\symalg}{\mathfrak{g}_F}
\newcommand{\hiddensection}[1]{} 

\usepackage{stmaryrd}

\newcommand{\im}{\operatorname{im}}

\usetikzlibrary{shapes.geometric, arrows}

\tikzstyle{startstop} = [rectangle, rounded corners, minimum width=3cm, minimum height=1cm,text centered, draw=black, fill=white]
\tikzstyle{process} = [rectangle, minimum width=3cm, minimum height=1cm, text centered, draw=black, fill=white]
\tikzstyle{decision} = [diamond, aspect=2, minimum width=3cm, minimum height=1cm, text centered, draw=black, fill=white]
\tikzstyle{arrow} = [thick,->,>=stealth]

\title{A dimension bound for symmetrizer groups of projective hypersurfaces}
\author{JeGyeong Jung} 
\date{\today}
\address{Department of Mathematical Sciences \\  
KAIST\\  
291 Daehak-ro\\
Yuseong-gu\\
Daejeon, 34141, Republic of Korea}
\email{jgjung@kaist.ac.kr}
\keywords{projective hypersurface, Jacobian ideal, symmetrizer, centralizer}
\subjclass[2020]{
    14J70, 
    14J17. 
}
\begin{document}

\begin{abstract}
Let $X$ be a projective hypersurface that is not a cone. The symmetrizer group of $X$ is an algebraic group that parametrizes hypersurfaces whose Jacobian ideal coincides with that of $X$. We prove that if the locus of points of multiplicity $d-1$ does not contain a line, then the nilpotent part of the Lie algebra of the symmetrizer group has dimension at most $2$, and consequently the symmetrizer group has dimension at most $\dim X+2$. Moreover, we show that if this locus has only finitely many lines, then the nilpotent part of the Lie algebra has dimension at most $4$, yielding the bound $\dim X+3$ for the symmetrizer group. To achieve this, we establish a connection between a class of singularities, called quasi-vertices, on $X$ with highly degenerate tangent cones and the unipotent part of its symmetrizer group.
\end{abstract}

\maketitle
\section{\bf Introduction}
Throughout this paper, we work over the complex numbers $\mathbb{C}$. Fix an integer $d$ and a vector space $V$ of dimension $n+1$. To avoid triviality, we assume that $d$ and $n+1$ are at least $3$. Let $\mathbb{P}V$ be the projective space parametrizing the one-dimensional subspaces of $V$ and $\operatorname{Sym}^d V^*$ be the vector space of symmetric $d$-forms on $V$.

For $F \in \operatorname{Sym}^d V^*$, consider the $\mathbb{C}$-linear map $\partial F : V \to \operatorname{Sym}^{d-1} V^*$, $v\mapsto \partial_vF$, where $\partial_v F\in \operatorname{Sym}^{d-1}V^*$ is the symmetric $(d-1)$-form given by
\[
\partial_v F(v_1, \dots, v_{d-1}) = F(v, v_1, \dots, v_{d-1}).
\]
It is well-known that $\ker \partial F = 0$ if and only if the hypersurface $Z(F) \subset \mathbb{P}V$ defined by the homogeneous polynomial of degree $d$ corresponding to $F$ is not a cone. Let $\operatorname{Sym}^d_o V^*$ be the Zariski open subset of $\operatorname{Sym}^d V^*$ consisting of all $F\in \operatorname{Sym}^dV^*$ such that $Z(F)$ is not a cone, and let $\operatorname{Gr}(n+1, \operatorname{Sym}^{d-1} V^*)$ be the Grassmannian of $n+1$-dimensional subspaces of $\operatorname{Sym}^{d-1} V^*$. Then we have the morphism
\[J : \operatorname{Sym}^d_o V^* \to \operatorname{Gr}(n+1, \operatorname{Sym}^{d-1} V^*), \quad F \mapsto \operatorname{im} \partial F.\]
The fibers of this morphism were first studied by Mammana in \cite{Mammana}, where he completely classified those $F$ for which $J^{-1}(J(F))\ne \mathbb{C}^\times F$ by determining their explicit forms. In \cite[Section 4.b]{CG79}, Carlson and Griffiths proved that for a general $F \in \operatorname{Sym}^d_o V^*$, the fiber $J^{-1}(J(F))$ is precisely $\mathbb{C}^\times \cdot F$, and they used this result in the study of the variation of Hodge structures of hypersurfaces. For related work on this morphism, see also \cite{UY}, \cite{Wang}, \cite{Hwang}, and \cite{canino2025detectingdirectsumstensors}.

In \cite{Hwang}, Hwang provided the following geometric description of $J$.

\begin{theorem}[{\cite[Theorem 1.5]{Hwang}}]\label{t_Hwang_1.5}
    Let $F\in \operatorname{Sym}^d_oV^*$. Then there is a connected abelian algebraic subgroup $G_F\subset \operatorname{GL}(V)$ canonically associated to $J(F)$, which contains $\mathbb{C}^\times\cdot \operatorname{Id}_V$, such that the fiber $J^{-1}(J(F))$ is a principal homogeneous space of the group $G_F$. 
\end{theorem}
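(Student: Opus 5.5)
Since $F$ is not a cone, $\partial F$ is injective, so $\partial F: V \to \operatorname{im}\partial F = J(F)$ is an isomorphism. We work on the level of linear maps $V\to V$ transported through this isomorphism.

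\emph{Identifying the fiber.} If $F' \in J^{-1}(J(F))$, then $\operatorname{im}\partial F' = \operatorname{im}\partial F$, and both maps are injective. Hence there is a unique $g_{F'} \in \operatorname{GL}(V)$ with
\[
\partial F' = \partial F \circ g_{F'}, \quad\text{i.e.}\quad F'(v,v_1,\dots,v_{d-1}) = F(g_{F'}v, v_1,\dots,v_{d-1}).
\]
Full symmetry of $F'$ forces the $(d)$-form $(v,v_1,\dots)\mapsto F(gv,v_1,\dots)$ to be symmetric. This motivates the definition
\[
G_F=\{g\in \operatorname{GL}(V) : F(gv,v_1,\dots,v_{d-1}) \text{ is symmetric in all } d \text{ arguments}\}.
\]
The map $g \mapsto F_g := F(g\cdot,\dots)$ is then a bijection $G_F \to J^{-1}(J(F))$. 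Indeed, $F_g$ is symmetric by definition, $\operatorname{im}\partial F_g = \operatorname{im}\partial F$ because $g$ is invertible, and $F_g$ is not a cone because $\partial F_g$ is injective. Injectivity of $g\mapsto F_g$ follows from injectivity of $\partial F$.

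\emph{Linear structure and commutativity.} Let $\mathfrak{g}_F \subset \operatorname{End}(V)$ be the set of $A$ for which $F(Av,v_1,\dots)$ is symmetric. It is a linear subspace defined by linear equations, and $G_F=\mathfrak g_F\cap \operatorname{GL}(V)$, a Zariski open subset of a vector space. This already gives connectedness and irreducibility once we know $G_F$ is nonempty, which holds since $\operatorname{Id}\in G_F$. The key computation, and the main obstacle, is to show that $\mathfrak g_F$ is a commutative algebra. For $A,B\in\mathfrak g_F$, we use the symmetry twice:
\[
F(ABv,w,\dots)=F(Bv,Aw,\dots)=F(Aw,Bv,\dots)=F(BAw,v,\dots)\,?
\]
This needs care. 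Write $F(ABv,w,u,\dots)$. Use $A$-symmetry to move $A$ onto the second slot: $F(Bv,Aw,u,\dots)$. Use $B$-symmetry to move $B$ onto the third slot: $F(v,Aw,Bu,\dots)$. Doing the same for $BA$ gives $F(v,Bw,Au,\dots)$. Applying $A$ and $B$ symmetries to swap which slot carries which operator shows both equal $F(Au,Bw,v,\dots)$, hence the two expressions agree. Therefore $\partial F\circ(AB-BA)=0$, and injectivity gives $AB=BA$. This slot-shuffling requires $d\ge3$, which we have.

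The same shuffling shows $\mathfrak g_F$ is closed under composition: $F(ABv,w,\dots)=F(Bv,Aw,\dots)=F(Aw,Bv,\dots)=F(w,BAv\ldots)$-type moves give symmetry. So $\mathfrak g_F$ is a commutative unital subalgebra, and $G_F$ is its group of units. Its units are closed under inverse because $A^{-1}$ is a polynomial in $A$. Thus $G_F$ is a connected abelian algebraic group containing $\mathbb C^\times\operatorname{Id}$.

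\emph{Principal homogeneous structure.} $G_F$ acts on the fiber by $h\cdot F_g = F_{gh}$. In terms of forms, $h$ sends $F'$ to $F'(h\cdot,\dots)$, which is symmetric because $gh\in G_F$. The action is simply transitive by the bijection above. It is canonical in $J(F)$: replacing $F$ by $F_g$ gives $\mathfrak g_{F_g}=g^{-1}\mathfrak g_F=\mathfrak g_F$, using commutativity and invertibility of $g$ in the algebra. So $G_F$ depends only on $J(F)$.
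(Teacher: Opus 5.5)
Your argument is correct. It is the same framework the paper imports from Hwang: the paper gives no proof of its own beyond Definition~\ref{d_Hwang} and the cited Propositions~\ref{p_Hwang_2.2} and~\ref{p_Hwang_2.3}. That framework is $G_F=\mathfrak{g}_F\cap\operatorname{GL}(V)$, commutativity of $\mathfrak{g}_F$ from injectivity of $\partial F$ via three-slot shuffling, and the equivariant bijection $g\mapsto F_g$ onto the fiber. One small fix: your displayed chain for closure under composition ends in an identity that already holds by symmetry of $F$. The identity you actually need is $F(ABv,w,\dots)=F(Bv,Aw,\dots)=F(v,BAw,\dots)=F(v,ABw,\dots)$, where the last equality uses commutativity.
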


The group $G_F$ is called the \emph{symmetrizer group} of $F$, and the Lie algebra $\symalg \subset \operatorname{End}(V)$ associated to $G_F$ is called the \emph{symmetrizer algebra} of $F$. In \cite{Hwang}, Hwang explained the decomposition of $\symalg$ as follows. By Theorem 3.1.1 and 3.4.7 of \cite{springer1998}, $G_F$ decomposes as
\[
G_F / (\mathbb{C}^\times \cdot \operatorname{Id}_V) = G_F^\times \times G_F^+,
\]
where $G_F^\times$ is a diagonalizable group and $G_F^+$ is a vector group. Let $\symalg$ be the Lie algebra corresponding to $G_F$. Then we have the corresponding decomposition
\begin{align*}
    \symalg / (\mathbb{C} \cdot \operatorname{Id}_V) = \symalg^\times \oplus \symalg^+,
\end{align*}
where $\symalg^+$ (resp. $\symalg^\times$) consists of nilpotent (resp. semisimple) elements. The nilpotent part $\symalg^+$ is closely related to the singularities of $Z(F)$.

\begin{proposition}[{\cite[Proposition 3.2(ii)]{Hwang}; see also \cite[page 134]{Mammana} and \cite[Theorem 1.1]{Wang}}]
\label{t_Hwang_1.6}
    Let $F\in \operatorname{Sym}^d_oV^*$ and $g\in \symalg^+$ a nonzero element such that $g^2=0$. Then every point of $\mathbb{P}(\im g)\subset \mathbb{P}V$ is a singular point of $Z(F)$ with multiplicity $d-1$.
\end{proposition}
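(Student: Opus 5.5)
The plan is to work directly with the symmetric multilinear form $F$ and the characterization of the symmetrizer algebra from \cite{Hwang}. I take it to be
\[
\symalg=\{\,g\in \operatorname{End}(V) : F(gv_1,v_2,\dots,v_d)\ \text{is symmetric in } v_1,\dots,v_d\,\}.
\]
This is the Lie algebra of the group $G_F$ of Theorem~\ref{t_Hwang_1.5}: for $g\in G_F$ the form $F(g\cdot,\cdot,\dots,\cdot)$ lies in the fiber $J^{-1}(J(F))$. The only consequence I need is the ``self-adjointness'' identity
\[
F(ga,b,v_3,\dots,v_d)=F(a,gb,v_3,\dots,v_d)\qquad\text{for all } a,b,v_i\in V,
\]
which follows by swapping the first two arguments of the symmetric form $F(g\cdot,\cdots)$.

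Next I translate the multiplicity condition into multilinear algebra. A point $[w]\in\mathbb{P}V$ has multiplicity at least $m$ on $Z(F)$ exactly when all partial derivatives of order $m-1$ of $F$ vanish at $w$. Up to nonzero constants these derivatives are the values
\[
F(\underbrace{w,\dots,w}_{d-m+1},x_1,\dots,x_{m-1}), \qquad x_i\in V .
\]
For $m=d-1$ this means $F(w,w,x_3,\dots,x_d)=0$ for all $x_i$; there are $d-2\ge 1$ free slots because $d\ge 3$. For $m=d$ it means $F(w,x_2,\dots,x_d)=0$ for all $x_i$, that is, $w\in\ker\partial F$.

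Now let $w=gu$ with $u\in V$ and $w\neq 0$. The identity above gives
\[
F(gu,gu,x_3,\dots,x_d)=F(g^2u,u,x_3,\dots,x_d)=0,
\]
since $g^2=0$. Hence every point of $\mathbb{P}(\im g)$ has multiplicity at least $d-1$; because $d\ge 3$, this is at least $2$, so the point is singular. To get multiplicity exactly $d-1$, I argue that multiplicity $d$ at $[w]$ would force $\partial_wF=0$. Then $\ker\partial F\neq 0$, so $Z(F)$ would be a cone, contradicting $F\in\operatorname{Sym}^d_oV^*$.

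The main point requiring care is not the computation but the bookkeeping. I must make sure the element $g$ of $\symalg^+$, which is a priori a class modulo $\mathbb{C}\cdot\operatorname{Id}_V$, is represented by an honest nilpotent endomorphism in $\symalg$ with $g^2=0$. This holds because the unipotent part $G_F^+$ lifts to $G_F\subset\operatorname{GL}(V)$ through the Jordan decomposition. With that representative fixed, the symmetry identity applies and the computation above goes through.
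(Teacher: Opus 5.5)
Your argument is correct and is essentially the paper's: the statement is the case $r=2$ of Proposition~\ref{p_multiplicity}, which uses the same computation $F(gv,gv,v_1,\dots,v_{d-2})=F(g^2v,v,v_1,\dots,v_{d-2})=0$, and multiplicity exactly $d-1$ then follows, as in your write-up, because a point of multiplicity $d$ would be a cone vertex, which $F\in\operatorname{Sym}^d_oV^*$ rules out. Your final ``bookkeeping'' paragraph does no harm but is not needed: in the paper, $\symalg^+$ is by definition the set of nilpotent elements of $\symalg\subset\operatorname{End}(V)$, so $g$ is already an honest endomorphism.
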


    In \cite{Hwang}, Hwang first raised the question of how large $\dim_{\mathbb{C}} \symalg^+$ can be when $Z(F)$ has only finitely many singular points of multiplicity $d-1$. We answer this question by proving Theorem \ref{t_main} and Theorem \ref{t_main_2}. In addition, we classify the Artinian local ring \[R_F:=\mathbb{C}\oplus \mathfrak{g}_F^+\] with the maximal ideal $\mathfrak{g}_F^+$.
    
\begin{theorem}\label{t_main}
    Let $F\in \operatorname{Sym}^d_oV^*$. Assume that the locus of points of $Z(F)\subset\mathbb{P}V$ with multiplicity $d-1$ does not contain a line. (This is the case, for example, if $Z(F)$ has only isolated singularities.)
            \begin{enumerate}
                \item The ring $R_F$ is isomorphic to 
                \[\mathbb{C},\quad \mathbb{C}[x]/(x^2),\quad \text{or}\quad \mathbb{C}[x]/(x^3),\]
                and, in particular, the sharp bound $\dim_\mathbb{C}\mathfrak{g}_F^+\le 2$ holds.
                \item We have the sharp bound $\dim G_F\le \dim_\mathbb{C}V$.
            \end{enumerate}
\end{theorem}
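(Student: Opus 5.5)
The plan rests on the algebraic description of the symmetrizer algebra from \cite{Hwang}. Concretely, $\symalg$ is the set of $g\in\operatorname{End}(V)$ with $F(gu,v,\dots)=F(u,gv,\dots)$; it is a commutative subalgebra of $\operatorname{End}(V)$ containing $\operatorname{Id}_V$, and $\symalg^+$ is its set of nilpotent elements. Hence $R_F=\mathbb{C}\oplus\symalg^+$ is a commutative Artinian local ring with maximal ideal $\maxid:=\symalg^+$.

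\textbf{Key lemma.} The central step is the following claim: every subspace $N\subset\maxid$ with $N\cdot N=0$ has $\dim N\le 1$. To prove it, note that every $g\in N$ satisfies $g^2=0$. By Proposition \ref{t_Hwang_1.6}, $\mathbb{P}(\im g)$ then lies in the locus of points of multiplicity $d-1$. Since that locus contains no line, $\operatorname{rank} g\le 1$ for all $g\in N$. A linear space of matrices of rank $\le1$ either has a common kernel or a common image.
\begin{itemize}
\item \emph{Common kernel, distinct images.} If two elements $g,h\in N$ had distinct image points, then the images of the combinations $g+th$ would sweep out the line through those two points. That line would lie in the locus, which is a contradiction.
\item \emph{Common image.} So all $g\in N$ have a common image point $p$, and we may write $g=p\otimes\alpha_g$. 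The symmetry condition becomes $\alpha_g(u)\,\partial_pF(v,\dots)=\alpha_g(v)\,\partial_pF(u,\dots)$. Since $\partial_pF\neq0$ ($Z(F)$ is not a cone), this forces every $\alpha_g$ to be proportional to a single fixed form. Hence $\dim N\le1$.
\end{itemize}

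\textbf{Deducing part (1).}
\begin{enumerate}
\item \emph{Bounding the nilpotency index.} Let $k$ be minimal with $\maxid^k=0$, and put $s=\lceil k/2\rceil$. Then $\maxid^s\cdot\maxid^s\subset\maxid^{2s}=0$, so the key lemma gives $\dim\maxid^s\le1$. Suppose $k\ge4$, and pick $x\in\maxid$ with $x^{k-1}\ne0$. Then $x^s,\dots,x^{k-1}$ are linearly independent elements of $\maxid^s$, and there are at least two of them. This contradicts $\dim\maxid^s\le1$, so $k\le3$.
\item \emph{The case $\maxid^2=0$.} Here $\maxid$ itself is square-zero, so $\dim\maxid\le1$, and $R_F\cong\mathbb{C}$ or $\mathbb{C}[x]/(x^2)$.
\item \emph{The case $\maxid^2\ne0$, $\maxid^3=0$.} The key lemma gives $\dim\maxid^2=1$. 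Multiplication then induces a symmetric bilinear form $\maxid/\maxid^2\times\maxid/\maxid^2\to\maxid^2\cong\mathbb{C}$.
\begin{itemize}
\item \emph{Nondegeneracy.} Let $K\subset\maxid$ be the preimage of the radical of this form. Then $K\cdot K=0$, so $\dim K\le 1$. Since $K\supset\maxid^2$, this forces $K=\maxid^2$, i.e.\ the form is nondegenerate.
\item \emph{Dimension one.} Any isotropic $u\notin\maxid^2$ would make $\mathbb{C}u+\maxid^2$ a $2$-dimensional square-zero subspace, which is impossible. A nondegenerate quadratic form over $\mathbb{C}$ in dimension $\ge2$ always has isotropic vectors, so $\dim\maxid/\maxid^2=1$.
\end{itemize}
Hence $R_F\cong\mathbb{C}[x]/(x^3)$.
\end{enumerate}
Sharpness will be shown by explicit examples, for instance Mammana-type forms realizing $\mathbb{C}[x]/(x^3)$ with isolated singularities.

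\textbf{Part (2).} Write $\dim G_F=\dim_\mathbb{C}\symalg$. Since $\symalg$ is a commutative algebra, it splits via its idempotents as $\symalg=\bigoplus_i R_i$ with each $R_i$ local. Correspondingly $V=\bigoplus_i V_i$, and $R_i$ acts faithfully on $V_i$. Each nilpotent square-zero element of $R_i$ is a nilpotent element of $\symalg$, so it lies in $\symalg^+$, and the argument above applies verbatim to the local factors. Hence each $R_i$ is $\mathbb{C}$, $\mathbb{C}[x]/(x^2)$ or $\mathbb{C}[x]/(x^3)$. A faithful module over $\mathbb{C}[x]/(x^j)$ has dimension $\ge j$, so $\dim R_i\le\dim V_i$. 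Summing over $i$ gives $\dim G_F\le\dim V$. Equality holds, e.g., for Fermat-type forms $\sum x_i^d$, where $G_F$ is the full diagonal torus.

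\textbf{Main obstacle.} I expect the key lemma to be the hardest step. Specifically, one must make sure the ``common image'' case really yields proportional $\alpha_g$, which requires using the symmetry condition carefully together with non-conicity.
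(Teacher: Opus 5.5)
Your argument is correct, but it takes a partly different route from the paper.

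\textbf{Part (1).} The paper's only geometric input is that $N_F^2$ is at most a single point. It deduces this from the quasi-vertex correspondence (Theorem~\ref{t_quasi-vertices}, which rests on Propositions~\ref{p_quasi-vertex_mult} and~\ref{p_kernel}). It then bounds the nilpotency index by Proposition~\ref{p_power_vanishes} and the dimension by the quadric count of Propositions~\ref{p_bound_for_Pm} and~\ref{p_minimal_p}.

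Your key lemma is weaker: it only says that linear square-zero subspaces of $\maxid$ have dimension $\le 1$, not that the whole cone $\widehat{N}_F^2$ is a line. Still, every square-zero set you actually use ($\maxid^s$, the preimage $K$ of the radical, $\mathbb{C}u+\maxid^2$) is linear, so it suffices. Its proof needs only Proposition~\ref{t_Hwang_1.6}, the classical description of linear spaces of rank-$\le 1$ maps, and $\partial_pF\neq 0$. No quasi-vertex theory is required.

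Your isotropic-vector step is the same count as Proposition~\ref{p_bound_for_Pm} with $q=2$. The nondegeneracy sub-step is superfluous, since any quadratic form on a space of dimension $\ge 2$ over $\mathbb{C}$ has a nonzero isotropic vector.

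\textbf{Part (2).} Here the routes genuinely differ. The paper combines the eigenvalue-counting inequality of Proposition~\ref{p_inequality} with the existence of an element of rank $\ge \dim_\mathbb{C}\maxid$. You instead split $\symalg$ into local factors and use that a faithful $\mathbb{C}[x]/(x^j)$-module has dimension $\ge j$.

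Your version is more structural: it shows $\dim G_F\le\dim_\mathbb{C}V$ whenever all local factors are curvilinear. You also need not rerun the argument on each factor. Nonzero square-zero elements taken from two different $\maxid_i$ would span a $2$-dimensional square-zero subspace, so at most one factor is non-reduced, and that factor is isomorphic to $R_F$. The paper's inequality, in turn, is reused in Theorem~\ref{t_main_2}(2), where non-curvilinear $R_F$ occur.

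\textbf{Two repairs.}

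First, choosing $x$ with $x^{k-1}\neq 0$ needs the polarization argument of Lemma~\ref{l_power_vanishes}. Alternatively, Nakayama gives the strict chain $\maxid^s\supsetneq\cdots\supsetneq\maxid^{k-1}\supsetneq 0$, hence $\dim_\mathbb{C}\maxid^s\ge k-s\ge 2$ directly.

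Second, and more substantively, your proposed sharpness witness for $\dim_\mathbb{C}\symalg^+\le 2$ ``with isolated singularities'' cannot exist for $d\ge 4$. If $f^2v\neq 0=f^3v$, Proposition~\ref{p_multiplicity} with $r=3$ makes every point of the line $\mathbb{P}\langle fv,f^2v\rangle$ have multiplicity $\ge d-2\ge 2$. This is exactly the $d\ge 4$ corollary in Section~\ref{section_6}. Sharpness must come from an example where $\Delta_F$ contains no line but the singular locus may be non-isolated, such as Example~\ref{e_sharp}. Your Fermat example for the sharpness of part (2) is fine.
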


\begin{theorem}\label{t_main_2}
    Let $F\in \operatorname{Sym}^d_oV^*$. Assume that the locus of points of $Z(F)\subset\mathbb{P}V$ with multiplicity $d-1$ has only finitely many lines of $\mathbb{P}V$. (This is the case, for example, if the singular locus of $Z(F)$ has only finitely many lines.)
    \begin{enumerate}
        \item We have the sharp bound $\dim_\mathbb{C}\mathfrak{g}_F^+\le 4$, and $R_F$ is isomorphic to one of the following $10$ rings:
        \begin{center}
        \begin{tabular}{c|c}
            $\dim_\mathbb{C}\maxid$& $R_F$\\
            \hline
            $0$& $\mathbb{C}$\\
            $1$& $\mathbb{C}[x]/(x^2)$\\
            $2$& $\mathbb{C}[x]/(x^3)$,\quad $\mathbb{C}[x,y]/(x,y)^2$\\
            $3$& $\mathbb{C}[x]/(x^4)$,\quad $\mathbb{C}[x,y]/(xy,x^3,y^2)$,\quad $\mathbb{C}[x,y]/(x^2,y^2,(x,y)^3)$\\
            $4$& $\mathbb{C}[x]/(x^5)$,\quad $\mathbb{C}[x,y]/(xy,x^3,y^3)$,\quad $\mathbb{C}[x,y]/(xy,y^2-x^3,x^4)$.
        \end{tabular}
        \end{center}
        \item Moreover, $\dim G_F\le \dim_\mathbb{C}V+1$.
    \end{enumerate}
\end{theorem}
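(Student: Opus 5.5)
The plan is to turn the hypothesis on lines into a bound on square-zero ideals of $R_F$, classify local algebras under that bound, and then add the semisimple part for the dimension of $G_F$. I use the structure recalled before Theorem~\ref{t_main}: $\symalg$ is a commutative subalgebra of $\operatorname{End}(V)$ containing $\operatorname{Id}_V$. So $R_F=\mathbb{C}\oplus\symalg^+$ is a commutative Artinian local $\mathbb{C}$-algebra acting faithfully on $V$, and Proposition~\ref{t_Hwang_1.6} controls square-zero elements. Write $\maxid=\symalg^+$, let $S\subset\mathbb{P}V$ be the locus of points of multiplicity $d-1$, and let $s$ be maximal with $\maxid^{s}\neq0$.

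\textbf{Step 1 (key lemma).} Let $I\subset\maxid$ be an ideal with $I^2=0$. I will show that, under the hypothesis of Theorem~\ref{t_main_2}, $\dim_{\mathbb{C}} I\le 2$.
\begin{itemize}
\item Every $g\in I$ satisfies $g^2=0$, so Proposition~\ref{t_Hwang_1.6} gives $\mathbb{P}(gV)\subset S$ for all $g\in I$.
\item If some $g\in I$ has $\rk g\ge3$, then $\mathbb{P}(gV)$ is a plane or larger inside $S$, which contains infinitely many lines. Hence $\rk g\le 2$ for all $g\in I$.
\item As $g$ runs over $\mathbb{P}I$, the rank-two elements give lines $\mathbb{P}(gV)\subset S$. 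Finiteness of lines in $S$ forces $g\mapsto gV$ to be constant on a dense open set. So all of $IV$ lies in one fixed plane $W$ with $\dim W\le2$.
\item Then $I$ embeds into $\operatorname{Hom}(V/K,W)$, where $K=\bigcap_{g\in I}\ker g$, and $I$ is stable under multiplication by $R_F$.
\item To get $\dim I\le2$, pick $v$ with $Iv\neq0$ and use the $R_F$-module structure and faithfulness to show that the evaluation map $I\to W$, $g\mapsto gv$, is injective for general $v$.
\item If this injectivity fails, I expect the kernel to give a second family of square-zero elements. Their images should sweep out a cone over a line, which again yields infinitely many lines. This is the ``quasi-vertex'' geometry announced in the abstract.
\end{itemize}

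\textbf{Step 2 (algebra).} I apply Step 1 to $I=\maxid^{\lceil (s+1)/2\rceil}$ and to the socle $(0:\maxid)$, both of which square to zero. This gives $\dim\maxid^{\lceil (s+1)/2\rceil}\le2$ and $\dim(0:\maxid)\le 2$. I will then check that a local algebra with $\dim\maxid\ge5$ always has a square-zero ideal of dimension at least $3$. The main cases are:
\begin{itemize}
\item \emph{Embedding dimension $1$.} Here $R_F=\mathbb{C}[x]/(x^{s+1})$ with $s\ge5$, and $(x^{\lceil (s+1)/2\rceil})$ has dimension at least $3$.
\item \emph{Embedding dimension $\ge3$.} Then $(0:\maxid)+\maxid^{s}$ together with suitable elements of $\maxid^{2}$ is large. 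For instance, if $\maxid^2=0$ then $\maxid$ itself is square-zero of dimension at least $3$.
\item \emph{Embedding dimension $2$.} Use the Hilbert function $(1,2,h_2,\dots)$ and check that $\maxid^{2}$, or $\maxid^{3}$ plus the socle, has square-zero dimension at least $3$ once $\dim\maxid\ge5$. Examples are $\mathbb{C}[x,y]/(xy,x^4,y^3)$ and $\mathbb{C}[x,y]/(x,y)^3$.
\end{itemize}
Hence $\dim\maxid\le4$. The ten rings in the table are then the local algebras of dimension at most $5$, read off from the known small classification, that satisfy both constraints. For example, $\mathbb{C}[x,y]/(x^2,y^2)$ survives but $\mathbb{C}[x,y,z]/(x,y,z)^2$ is excluded. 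Sharpness comes from writing explicit forms $F$ realising each ring, following Mammana's normal forms: take $F$ built from a nilpotent operator of the given type acting on blocks of coordinates, and check that $S$ has finitely many lines.

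\textbf{Step 3 (dimension of $G_F$).} We have $\dim G_F=1+\dim\symalg^\times+\dim\symalg^+$. The semisimple part corresponds to a direct-sum decomposition $V=\bigoplus_{i=1}^k V_i$, with $F=\sum F_i$ and $\dim\symalg^\times=k-1$. The nilpotent elements preserve each $V_i$, and $\maxid$ splits accordingly. Each block with a nonzero nilpotent part has $\dim V_i\ge2$, and a block carrying an $m$-dimensional nilpotent algebra needs $\dim V_i\ge m+1$ by faithfulness. Running Step 2 on the total $\maxid$ then gives $k-1+\dim\maxid\le \dim V$. Together with $\dim\maxid\le4$ this yields $\dim G_F\le\dim V+1$, the extra $1$ coming only from the rings with $\dim\maxid\ge3$.

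\textbf{Main obstacle.} The hardest step is Step 1: showing that finitely many lines in $S$ forces square-zero ideals to have dimension at most $2$. This requires the injectivity of $I\to W$ and a careful description of how images of square-zero elements move. I would attack it first by analysing rank-one elements $g$ in a $2$-dimensional square-zero pencil, showing that their image points trace a line in $S$ unless all images coincide.
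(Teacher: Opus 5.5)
Step~1 carries the whole proof, and the mechanism you propose for it does not work. Your observation that the rank-two elements of $I$ all have the same image $W$ is fine. But faithfulness of $R_F$ on $V$ together with $I^2=0$ only gives $W\subset K$; it gives no bound on $\dim V/K$, so $\operatorname{Hom}(V/K,W)$ can be large. Injectivity of $g\mapsto gv$ for general $v$ is essentially the statement to be proved, not a consequence of faithfulness. The missing idea is the Hessian together with quasi-vertices:
\begin{itemize}
\item A line contains at most two quasi-vertices (Proposition~\ref{p_quasi-vertices_conic}), so a general $\xi\in W$ has $\operatorname{rank}_F(\xi)=2$.
\item Proposition~\ref{p_kernel} then forces every rank-two square-zero symmetrizer with image $W$ to have kernel $U=\ker h_F(\xi)$, which has codimension $2$. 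So their span lies in $\operatorname{Hom}(V/U,W)\cong\mathbb{C}^{2\times 2}$.
\item There, rank-one elements up to scale correspond bijectively to quasi-vertices (Theorem~\ref{t_quasi-vertices}(1)). There are at most two quasi-vertices, since three would span a plane in $\Delta_F$.
\item Hence the projectivized span meets the determinantal quadric in at most two points and is at most a line (Proposition~\ref{p_Gr}, Lemma~\ref{l_B_W}).
\end{itemize}
The case where $I$ has no rank-two element also needs Lemma~\ref{l_at_most_two_quasi-vertices}(2).

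Even granting Step~1, Steps~2 and~3 break. Bounding square-zero \emph{ideals} does suffice for $\dim\maxid\le 4$: one can check that every local algebra with $\dim\maxid\ge 5$ contains a $3$-dimensional square-zero ideal, and this would even replace Lemma~\ref{l_p=4}. It does not give the list of ten rings, however. Take $\mathbb{C}[x,y,z]/(xy,yz,zx,x^2-y^2,x^2-z^2)$, with Hilbert function $(1,3,1)$. Its socle is $\mathbb{C}x^2$, and every square-zero ideal lies in $\mathbb{C}x^2$ plus an isotropic line of the nondegenerate form $\alpha^2+\beta^2+\gamma^2$ on $\maxid/\maxid^2$, so it has dimension $\le 2$. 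This ring satisfies both of your constraints but is not in the table. The paper excludes it because its square-zero locus is a quadric cone in $\mathbb{P}\maxid\cong\mathbb{P}^3$, contradicting $\dim N_F^2\le 1$ (Theorem~\ref{t_component}). So your constant-image argument has to be run on irreducible components of $N_F^2$, not on $\mathbb{P}I$.

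In Step~3, faithfulness does not give $\dim V_i\ge m+1$. For example, $\mathbb{C}\operatorname{Id}\oplus\bigl\{\left(\begin{smallmatrix}0&B\\0&0\end{smallmatrix}\right):B\in\mathbb{C}^{2\times 2}\bigr\}$ acts faithfully on $\mathbb{C}^4$ with a $4$-dimensional nilpotent part. What works is Proposition~\ref{p_inequality}. Eigenspaces of semisimple elements are $g$-stable, so there are at most $\dim V-\rk g$ of them, which gives $\dim G_F\le\dim V-\rk g+\dim\maxid$. You then need a nilpotent symmetrizer of rank $\ge\dim\maxid-1$. The bound $\rk f\ge p$ for $f^p\ne 0$ handles every ring in the list except $\mathbb{C}[x,y]/(xy,x^3,y^3)$. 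For that ring, if neither $x$ nor $y$ has rank $\ge 3$, then $x^2$ and $y^2$ are the only rank-one elements up to scale. Hence $\rk(x+y)^2\ge 2$ and $\rk(x+y)\ge 3$.
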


To obtain Theorem \ref{t_main}, we analyze a class of singularities with highly degenerate tangent cones.

\begin{definition}
Let $X\subset \mathbb{P}^n$ be a projective hypersurface of degree $d$. 
A point $x\in X$ is called a \textbf{quasi-vertex} if the tangent cone to $X$ at $x$ is the non-reduced hypersurface in $\mathbb{C}^n$ cut out by $L^{d-1}$ for some homogeneous linear polynomial $L$.
\end{definition}

 For example, the $E_6$ singularity on a cubic surface is a quasi-vertex. In Section \ref{section_4}, we will see that a quasi-vertex behaves like a vertex of a cone. The locus of quasi-vertices in $Z(F)$ can be described in terms of $\symalg^+$:

\begin{theorem} \label{t_quasi-vertices} Let $F\in\operatorname{Sym}^d_oV^*$.
\begin{enumerate}
    \item There is a one-to-one correspondence
    \begin{align*}\{g\in \mathbb{P} \symalg^+\mid \rk g=1\}&\longleftrightarrow \{\text{quasi-vertices of $Z(F)$}\}\\
    g&\longmapsto \mathbb{P} (\im g).
    \end{align*}
    \item Assume that the locus of points of $Z(F)$ with multiplicity $d-1$ does not contain a line. The following are equivalent.\begin{enumerate}
        \item $\symalg^+\ne 0$.
        \item $Z(F)$ has a quasi-vertex and it is the unique point of $Z(F)$ with multiplicity $d-1$.
        \item $Z(F)$ has a quasi-vertex.
    \end{enumerate}
\end{enumerate}
\end{theorem}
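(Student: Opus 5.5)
The plan is to work throughout with the description of $\symalg$ as the set of $g\in\operatorname{End}(V)$ such that the $d$-linear form $(v,v_1,\dots,v_{d-1})\mapsto F(gv,v_1,\dots,v_{d-1})$ is symmetric. This is the characterization underlying Theorem \ref{t_Hwang_1.5}: $g\in\symalg$ iff $\partial_{gv}F=\partial_v G$ for some $G$. I will also use that $\symalg$ is a commutative subalgebra of $\operatorname{End}(V)$ whose nilpotent elements are exactly those in $\symalg^+$, using the Jordan decomposition in the abelian group $G_F$.

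For part (1), I first classify rank-one elements. Write $g=w\otimes\lambda$, i.e.\ $g(v)=\lambda(v)w$. The form $\lambda(v)\,\partial_wF(v_1,\dots,v_{d-1})$ is symmetric in all $d$ slots iff $\partial_wF=c\lambda^{d-1}$. Here one direction is the elementary fact that a product $\lambda\cdot\Phi$ which is fully symmetric forces $\Phi\propto\lambda^{d-1}$; the other is clear. Moreover $c\neq0$ because $\ker\partial F=0$. Such a $g$ is nilpotent iff $\lambda(w)=0$. Since $F(w)=\tfrac1d\,\partial_wF(w)=\tfrac cd\,\lambda(w)^{d-1}$, the point $[w]$ lies on $Z(F)$ iff $\lambda(w)=0$. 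Hence rank-one elements of $\symalg^+$ correspond to pairs $w\in V$, $\lambda\in V^*$ with $\lambda(w)=0$ and $\partial_wF\propto\lambda^{d-1}$.

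Next I choose coordinates with $w=e_0$, so $\lambda$ does not involve $x_0$. Integrating $\partial F/\partial x_0=c\lambda^{d-1}$ gives $F=c\,x_0\lambda^{d-1}+H(x_1,\dots,x_n)$. The local equation at $[e_0]$ is then $c\lambda^{d-1}+H$, with tangent cone $\lambda^{d-1}$, so $[e_0]$ is a quasi-vertex. Conversely, if $[e_0]$ is a quasi-vertex, multiplicity $d-1$ gives $F=x_0A_{d-1}+A_d$ with $A_i$ free of $x_0$, and the tangent-cone condition gives $A_{d-1}=L^{d-1}$. Then $g=e_0\otimes L$ lies in $\symalg$, has rank one, and is nilpotent. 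For injectivity of $g\mapsto\mathbb{P}(\im g)$, note that $w$ determines $\lambda$ up to scalar via $\partial_wF\propto\lambda^{d-1}$. So $[g]\in\mathbb{P}\symalg^+$ is determined by its image, and the map is a bijection.

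For part (2), (b)$\Rightarrow$(c) is trivial and (c)$\Rightarrow$(a) follows from (1). For (a)$\Rightarrow$(b), I view $R_F=\mathbb{C}\oplus\symalg^+$ as a local Artinian algebra and take a nonzero $g$ in the last nonzero power of $\symalg^+$, so that $g^2=0$. By Proposition \ref{t_Hwang_1.6}, $\mathbb{P}(\im g)$ lies in the locus of multiplicity-$(d-1)$ points. Since that locus contains no line, $\rk g=1$, and (1) gives a quasi-vertex $p=[e_0]$ with $F=x_0L^{d-1}+H$.

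For uniqueness, which I expect to be the main computational point, suppose $q=[u]\neq p$ also has multiplicity $d-1$, i.e.\ $F(u,u,\cdot,\dots,\cdot)=0$ as a $(d-2)$-form. Evaluating this at $e_0$ and using $F(e_0,\cdot,\dots,\cdot)\propto L^{d-1}$ gives $L(u)^2L^{d-3}=0$, hence $L(u)=0$ since $d\ge3$. Now expand
\[
F(su+te_0,\,su+te_0,\,\cdot\,)=s^2F(u,u,\cdot)+2st\,F(u,e_0,\cdot)+t^2F(e_0,e_0,\cdot).
\]
Here $F(u,e_0,\cdot)\propto L(u)L^{d-2}=0$ and $F(e_0,e_0,\cdot)\propto L(e_0)L^{d-2}=0$. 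Hence the whole line $pq$ consists of points of multiplicity $\ge d-1$, and not $d$ since $F$ is not a cone. This contradicts the hypothesis, so $p$ is the unique point of multiplicity $d-1$.
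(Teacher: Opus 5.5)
Your proposal is correct. Part (2) follows the paper's argument. A nonzero square-zero element of $\symalg^+$ has rank one because the multiplicity-$(d-1)$ locus contains no line. Part (1) turns its image into a quasi-vertex, and the line through that quasi-vertex and a second point of multiplicity $d-1$ gives the contradiction. The only difference is that you prove inline the case $m=d-1$ of Proposition~\ref{p_quasi-vertex_mult} instead of citing it. Your observation that $L(u)=0$ is forced means $u\in\ker h_F(e_0)=\ker L$. So the second case in the paper's proof of that proposition cannot occur when $m=d-1$.

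Part (1), however, takes a genuinely different route. The paper works with the Hessian rank $\operatorname{rank}_F(\xi)=\dim\im h_F(\xi)$:
\begin{itemize}
\item well-definedness comes from Proposition~\ref{p_r};
\item surjectivity comes from Lemmas~\ref{l_r_iff_quasi-vertex} and~\ref{l_r at most 1}, which build $g=\xi_0\otimes\xi_n^*$ from a basis adapted to $\ker h_F(\xi_0)$ and verify it with the Hessian criterion;
\item injectivity comes from Proposition~\ref{p_kernel}.
\end{itemize}
You instead classify rank-one symmetrizers directly: $w\otimes\lambda\in\symalg$ if and only if $\partial_wF\propto\lambda^{d-1}$. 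This gives at once the normal form $c\,x_0\lambda^{d-1}+H$, which is the case $m=0$ of Theorem~\ref{t_Mammana}. It also gives injectivity by unique factorization of $\lambda^{d-1}$. Your injectivity step is really Proposition~\ref{p_kernel} in disguise: since $\ker g=\ker\lambda=\ker h_F(w)$, saying that $w$ determines $\lambda$ is the same as saying that the image determines the kernel.

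Your route is shorter, self-contained, and more explicit, since it identifies $\ker g$ with the hyperplane of the tangent cone. The paper's route is built so that $\operatorname{rank}_F$, Proposition~\ref{p_r} and Proposition~\ref{p_kernel} apply to square-zero symmetrizers of any rank. That is what Section~\ref{section_5} reuses in Proposition~\ref{p_Gr}, Lemma~\ref{l_B_W} and Theorem~\ref{t_component}, where the symmetrizers have no comparably simple tensor description.

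One harmless slip: in the paper's convention $\partial_wF(w,\dots,w)=F(w,\dots,w)$ with no factor $1/d$. That factor belongs to the polynomial normalization, and only the vanishing matters in your argument.
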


The proof of Theorem \ref{t_main_2} requires more than the analysis of quasi-vertices. A key ingredient in is a detailed study of the irreducible components of the square-zero locus
\[N_F^2:=\{g\in \mathbb{P}\symalg^+\mid g^2=0\},\]
which is carried out in Theorem \ref{t_component}.
\medskip

We now provide a brief overview of the paper. 
In Section \ref{section_2}, we recall several definitions and results, present some generalizations, and establish several foundational results that will be used throughout the paper.
In Section \ref{section_3}, we establish basic properties of the Hessian of a symmetric $d$-form.
In Section \ref{section_4}, we investigate the behavior of quasi-vertices and prove Theorem \ref{t_quasi-vertices}.
In Section \ref{section_5}, we first establish the key ingredients for the dimension bounds including Theorem \ref{t_component} and then prove Theorems \ref{t_main} and \ref{t_main_2}.
Finally, in Section \ref{section_6}, we present several corollaries. In particular, using the classifications of Bruce–Wall \cite{BW} and Viktorova \cite{Viktorova}, we present a complete list of isolated singularities of cubic surfaces and cubic threefolds whose symmetrizer algebra has a nontrivial nilpotent part. \\ \\
\textbf{Acknowledgements.} The author would like to thank his advisor, Professor Yongnam~Lee, for suggesting this topic and for his guidance and valuable comments. The author is indebted to the referee for a careful reading of the manuscript and for many detailed suggestions, which improved both the exposition and the content of the paper. The author is also grateful to Professor Jun-Muk Hwang for helpful discussions, to Professor Francesco Russo for kindly providing access to Carmelo Mammana’s paper~\cite{Mammana}, and to Haesong Seo, Chiwon Yoon and Hyukmoon Choi for their detailed comments on an earlier draft of this paper. The author is supported by the Institute for Basic Science (IBS-R032-D1).

\section{\bf Preliminaries}\label{section_2}
\begin{notation}\label{n} We provide a list of the notation used in this paper.\begin{enumerate}
    \item Let $V^*$ be the dual space of $V$. We denote by $\operatorname{Sym}^dV^*$ the $\mathbb{C}$-vector space of all symmetric $d$-linear forms on $V$. That is, a $d$-linear form $F:V\times\cdots\times V\to \mathbb{C}$ is contained in $\operatorname{Sym}^dV^*$ if and only if for any $v_1,...,v_d\in V$ and any permutation $\sigma$ of the permutation group on $\{1,...,d\}$, $F(v_1,...,v_d)=F(v_{\sigma(1)},...,v_{\sigma(d)})$.
    \item For a nonzero vector $v\in V$, $[v]$ denotes the image of $v$ in $\mathbb{P} V$.
    \item Let $F\in\operatorname{Sym}^dV^*$. For a point $[\xi]\in Z(F)$, $\operatorname{mult}_{[\xi]}(F)$ denotes the multiplicity of $Z(F)$ at $[\xi]$. Note that $\operatorname{mult}_{[\xi]}(F)\ge m$ if and only if 
    \[F(\underbrace{\xi,...,\xi}_{d-m+1},v_1,...,v_{m-1})=0\] 
    for any $v_1,...,v_{m-1}\in V$. We denote by $\operatorname{Sym}^d_oV^*$ the Zariski open set of $\operatorname{Sym}^dV^*$ consisting of all $F$ such that $Z(F)$ is not a cone.
    \item For $F\in \operatorname{Sym}^dV^*$, $Z(F)$ is said to be a \textbf{cone}, if it is a cone over a hypersurface in $\mathbb{P}^{n-1}\subset \mathbb{P}V$. Note that $Z(F)$ is a cone with vertex $[\xi]\in Z(F)$ if and only if $\operatorname{mult}_{[\xi]}(F)=d$.
    \item Let $S\subset V$ be a subset. The subspace of $V$ generated by $S$ is denoted by $\langle S\rangle$.
    \item For $g\in \operatorname{End}V$, $\rk g$ denotes the dimension of $\im g$.
    \item The vector space of homogeneous polynomials of degree $d$ in $\mathbb{C}[x_0,...,x_n]$ is denoted by $\mathbb{C}[x_0,...,x_n]_d$.
\end{enumerate}
\end{notation}

\begin{definition}[{\cite[Definition 2.1]{Hwang}; see also \cite[Section 4]{Harrison}}] \label{d_Hwang} 
    Let $F\in \operatorname{Sym}^dV^*$ and $g\in \operatorname{End}V$.
        \begin{enumerate}
            \item We define $F^g\in \otimes^dV^*$ by \[F^g(v_1,...,v_d)=F(gv_1,v_2,...,v_d), \text{ for } v_1,...,v_d\in V.\]
            \item  We say that $g$ is a \textbf{symmetrizer} of $F$ if $F^g$ belongs to $\operatorname{Sym}^dV^*$, namely, 
    \begin{align*}F(gv_1,v_2,...,v_d)&=F(v_1,gv_2,v_3,...,v_d)\\&=F(v_1,v_2,gv_3,...,v_d)\\&=\cdots\\&=F(v_1,...,v_{d-1},gv_d).\end{align*}
            \item The space of all symmetrizers of $F$ is denoted by $\symalg$, and it is called the \textbf{symmetrizer algebra} of $F$.
        \item The intersection $G_F=\symalg\cap \operatorname{GL}(V)$ is called the \textbf{symmetrizer group} of $F$.
        \end{enumerate}
\end{definition}

The symmetrizer algebra appeared implicitly in Mammana's work \cite{Mammana} and explicitly in Harrison's paper \cite[Section 4]{Harrison}, where it was referred to as the \emph{center}. It was later reintroduced by Hwang in \cite{Hwang}.

\begin{proposition}[{\cite[Proposition 2.2]{Hwang}}]\label{p_Hwang_2.2} Let $F\in \operatorname{Sym}^d V^*$.
\begin{enumerate}
    \item The vector space $\symalg$ is a Lie subalgebra of $\operatorname{End}(V)$ under the composition in $\operatorname{End}(V)$.
    \item If $F\in \operatorname{Sym}^d_oV^*$ then the Lie algebra $\symalg$ is abelian.
    \item The symmetrizer group $G_F$ is a connected closed subgroup of $\operatorname{GL}(V)$, corresponding to the Lie subalgebra $\symalg\subset \operatorname{End}(V)=\mathfrak{gl}(V)$. It is an abelian group if $F\in \operatorname{Sym}^d_oV^*$.
\end{enumerate}
\end{proposition}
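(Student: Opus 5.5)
We prove the three parts of Proposition \ref{p_Hwang_2.2} directly from the defining identities of a symmetrizer. The only inputs are the symmetry of $F$ and the criterion that $\ker\partial F=0$ for $F\in\operatorname{Sym}^d_oV^*$.

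\emph{Part (1).} Let $g,h\in\symalg$. Each symmetrizer can be moved freely from one slot of $F$ to any other. For the composition $gh$ we compute
\[
F(ghv_1,v_2,\dots,v_d)=F(hv_1,gv_2,v_3,\dots,v_d)=F(v_1,gv_2,\dots)^{\text{with }h\text{ moved}},
\]
that is, we move $g$ to the second slot and then $h$ to the third slot, and then back, as needed. More precisely, we aim to show $F(ghv_1,v_2,\dots)=F(v_1,ghv_2,\dots)$. First move $g$ from slot 1 to slot 3; this needs $d\ge 3$, which holds. Next move $h$ from slot 1 to slot 2. Finally move $g$ from slot 3 to slot 2. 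This gives $F(v_1,ghv_2,v_3,\dots)$. The other transpositions of slots follow in the same way, or from the symmetry of $F$ together with transpositions involving slot 1. Hence $gh\in\symalg$. Since $\symalg$ is also a linear subspace, it is an associative subalgebra, and therefore a Lie subalgebra.

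\emph{Part (2).} The same manipulation with the roles exchanged shows
\[
F(ghv_1,v_2,\dots)=F(hv_1,gv_2,\dots)=F(hgv_1,v_2,\dots).
\]
Here we move $g$ to slot 2, and then move it back in front of $h$ after moving $h$ out of the way via slot 3. It follows that $F((gh-hg)v_1,v_2,\dots,v_d)=0$ for all $v_i$. This says that $(gh-hg)v_1\in\ker\partial F$. For $F\in\operatorname{Sym}^d_oV^*$ we have $\ker\partial F=0$, so $gh=hg$.

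\emph{Part (3).} By part (1), $\symalg$ is a unital associative subalgebra of $\operatorname{End}V$; it contains $\operatorname{Id}$. The group of units of a finite-dimensional associative algebra $A$ is $A\cap\operatorname{GL}(V)$, because the inverse of an invertible $g\in A$ is a polynomial in $g$ by Cayley--Hamilton. So $G_F$ is a group. It is a nonempty Zariski open subset of the vector space $\symalg$, hence irreducible and connected. It is closed in $\operatorname{GL}(V)$, being cut out by linear equations. Its Lie algebra is its tangent space at $\operatorname{Id}$, which is $\symalg$. Commutativity of $G_F$ follows from part (2).

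The main care point is the slot-shuffling in parts (1) and (2). One must make sure that a third free slot exists, which is where $d\ge3$ is used. With $d=2$ the statement fails: symmetrizers are then the self-adjoint operators, which are not closed under composition.
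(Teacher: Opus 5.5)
Your proof is correct. The paper gives no proof of its own here: it imports the result from Hwang's Proposition 2.2 (and Proposition 2.3 for the group).

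Your slot-shuffling verification is a complete, self-contained substitute. Moving $g$ and $h$ through a third slot is exactly where the standing assumption $d\ge 3$ enters. It establishes closure under composition and $F^{gh}=F^{hg}$, and that is precisely the commutative-algebra structure on $\mathfrak{g}_F$ (and on $R_F=\mathbb{C}\oplus\mathfrak{g}_F^+$) that the paper relies on later.

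Two small remarks. First, the opening display in your Part (1), with the superscript ``with $h$ moved'', is not a meaningful equation. Drop it in favour of the precise three-move sequence you give right after. Second, in Part (3) you can avoid Cayley--Hamilton: for $g\in G_F$, one has directly $F(g^{-1}v_1,v_2,\dots)=F(g^{-1}v_1,g g^{-1}v_2,\dots)=F(v_1,g^{-1}v_2,\dots)$, so $g^{-1}\in\mathfrak{g}_F$.
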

    
    If $F\in \operatorname{Sym}^d_oV^*$, then Proposition \ref{p_Hwang_2.2} shows that $\symalg\subset \operatorname{End}(V)$ is an abelian Lie algebra. Moreover, since $\symalg$ is closed under composition in $\operatorname{End}(V)$, it may also be regarded as a commutative subalgebra of $\operatorname{End}(V)$. Throughout the paper, we will frequently use this algebra structure. The algebra $\symalg$ endowed with this structure is also called the \emph{centralizer}.

\begin{proposition}[{\cite[Proposition 2.3]{Hwang}}]\label{p_Hwang_2.3}
    For $F\in \operatorname{Sym}^dV^*$, the intersection 
    \[G_F=\symalg\cap \operatorname{GL}(V)\]
    is a connected closed subgroup of $\operatorname{GL}(V)$, corresponding to the Lie subalgebra $\symalg\subset \operatorname{End}(V)=\mathfrak{gl}(V)$. It is an abelian group if $F\in \operatorname{Sym}^d_oV^*$. 
\end{proposition}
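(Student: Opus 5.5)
We need to prove Proposition \ref{p_Hwang_2.3}: $G_F=\symalg\cap\operatorname{GL}(V)$ is a connected closed subgroup of $\operatorname{GL}(V)$ whose Lie algebra is $\symalg$, and it is abelian when $F\in\operatorname{Sym}^d_oV^*$.

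The plan is to exploit the fact that $\symalg$ is not merely a Lie subalgebra but a unital associative subalgebra of $\operatorname{End}(V)$, closed under composition. Closure under composition is the content behind Proposition \ref{p_Hwang_2.2}(1), and one checks it directly: if $g,h$ are symmetrizers, then
\[
F(ghv_1,v_2,\dots)=F(hv_1,gv_2,\dots)=F(v_1,gv_2,\dots)\text{ with }h\text{ moved to another slot},
\]
and the swaps continue in the same way. Clearly $\operatorname{Id}_V\in\symalg$.

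\textbf{Step 1: $G_F$ is a group.} Closure under products follows from closure of $\symalg$ under composition. For inverses, take $g\in G_F$. By Cayley–Hamilton, $g^{-1}$ is a polynomial in $g$, and since $\symalg$ is a unital subalgebra, $g^{-1}\in\symalg$.

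\textbf{Step 2: $G_F$ is closed and connected.} $G_F$ is the intersection of a linear subspace with $\operatorname{GL}(V)$, so it is Zariski closed in $\operatorname{GL}(V)$. It is also a nonempty Zariski open subset of the irreducible affine space $\symalg$, since it contains $\operatorname{Id}_V$ and is cut out by $\det\neq 0$. Hence it is irreducible, so connected, and $\dim G_F=\dim\symalg$.

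\textbf{Step 3: the Lie algebra is $\symalg$.} The tangent space of the open subset $G_F\subset\symalg$ at $\operatorname{Id}_V$ is $\symalg$ itself, and this tangent space is the Lie algebra of the closed subgroup $G_F\subset\operatorname{GL}(V)$.

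\textbf{Step 4: commutativity.} By Proposition \ref{p_Hwang_2.2}(2), $\symalg$ is abelian when $F\in\operatorname{Sym}^d_oV^*$. Since $G_F\subset\symalg$, any two elements of $G_F$ commute as endomorphisms. Alternatively, one can use that a connected group with abelian Lie algebra is abelian.

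\textbf{Main obstacle.} The only nontrivial point is closure under composition, which I would verify carefully using symmetry of $F$. The remaining steps are formal.
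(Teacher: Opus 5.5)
Your proof is correct. The paper does not prove this statement itself: it quotes \cite[Proposition 2.3]{Hwang}, and the same assertion already appears as Proposition~\ref{p_Hwang_2.2}(3). So there is no in-paper argument to compare with. What you give is the standard self-contained justification:
\begin{itemize}
\item $G_F$ is a nonempty Zariski-open subset of the linear space $\mathfrak{g}_F$;
\item hence it is closed in $\operatorname{GL}(V)$, irreducible, of dimension $\dim_{\mathbb{C}}\mathfrak{g}_F$, with tangent space $\mathfrak{g}_F$ at $\operatorname{Id}_V$;
\item it is a group by closure under composition;
\item commutativity is inherited from Proposition~\ref{p_Hwang_2.2}(2).
\end{itemize}

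One point deserves to be made explicit. Closure of $\mathfrak{g}_F$ under composition genuinely uses the standing assumption $d\ge 3$, because a third slot is needed to park one of the symmetrizers:
\[F(ghv_1,v_2,v_3,\dots)=F(hv_1,v_2,gv_3,\dots)=F(v_1,hv_2,gv_3,\dots)=F(v_1,ghv_2,v_3,\dots).\]
Your displayed chain is only a sketch of this. Moving $h$ from the first slot straight into the second would produce $hg$ rather than $gh$. For $d=2$ the $F$-self-adjoint operators are not closed under composition, so $G_F$ would not even be a group.

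A minor simplification: inverses do not need Cayley--Hamilton, since directly
\[F(g^{-1}v_1,v_2,\dots)=F\bigl(g^{-1}v_1,g(g^{-1}v_2),\dots\bigr)=F(v_1,g^{-1}v_2,\dots).\]
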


Recall that for $F\in \operatorname{Sym}^d_oV^*$, $\symalg$ has the decomposition 
\[\symalg=\symalg^\times\oplus \symalg^+,\]
where $\symalg^+$ (resp. $\symalg^\times$) consists of nilpotent (resp. semisimple) elements.

The result \cite[Proposition 3.2 (ii)]{Hwang} can be generalized as follows.

\begin{proposition}\label{p_multiplicity}
    Let $F\in \operatorname{Sym}^dV^*$ and $g\in \symalg^+$. If $v\in V$ is an element such that $gv\ne 0$ and $g^rv=0$ with $2\le r\le d$, then $[gv]\in \mathbb{P}V$ is a point of $Z(F)$ with multiplicity at least $d-r+1$.
    \begin{proof}
        For any $v_1,...,v_{d-r}\in V$,
    \[F(\underbrace{g v,...,gv}_{r},v_{1},...,v_{d-r})=F(\underbrace{g^rv,v,...,v}_r,v_1,...,v_{d-r})=0.\]
    Thus $\operatorname{mult}_{[gv]}(F)\ge d-r+1$.
    \end{proof}
\end{proposition}

 The following two propositions allow us to compute the Jordan block forms of the symmetrizers.

\begin{proposition}\label{p_Jordan_blocks} Let $F\in \operatorname{Sym}^dV^*$. Let $g\in \symalg^+$ and let $k$ be the number of nonzero Jordan blocks of $g$. Then there is a $(k-1)$-dimensional linear subspace of $\mathbb{P}V$ whose every point is a singular point of $Z(F)$ with multiplicity $d-1$.
\begin{proof}
     Let 
    \[V\cong \left(\bigoplus_{i=1}^k \mathbb{C}[T]/(T^{r_i})\right)\oplus \mathbb{C}^q\]
    be the cyclic decomposition of $V$ with respect to the nilpotent element $g\in \operatorname{End}(V)$, where $r_1\ge\cdots\ge r_k\ge 2$. For each $i\in \{1,...,k\}$, let $\xi_i\in V$ be the element corresponding to $T^{r_i-2}\in \mathbb{C}[T]/(T^{r_i})$. Then $g\xi_1,...,g\xi_k\in V$ are linearly independent and $g^2\xi_i=0$. Thus, for any $(a_1:\cdots:a_k)\in \mathbb{P}^{k-1}$, we have
    \[g^2\left(\sum_{i=1}^ka_i\xi_i\right)=0.\]
    By Proposition \ref{p_multiplicity}, $[g(\sum_{i=1}^ka_i\xi_i)]\in \mathbb{P}V$ is a point of $Z(F)$ with multiplicity $d-1$. That is, every point of the $(k-1)$-dimensional linear subspace $\mathbb{P}\langle g\xi_1,...,g\xi_k\rangle$ is a singular point of $Z(F)$ with multiplicity $d-1$.
    \end{proof}
\end{proposition}

\begin{proposition}\label{p_power}
    Let $F\in \operatorname{Sym}^dV^*$. Assume that the locus of points in $Z(F)$ with multiplicity at least $d-1$ does not contain a linear subspace of dimension $k$. Then for any $g\in \symalg^+$, $g^{2k+1}=0$.
    \begin{proof}
        Assume the contrary that there is $g\in\symalg^+$ such that $g^{2k+1}\ne0$. Let $p>2k+1$ be the minimal integer such that $g^p=0$. Let $r=\lceil p/2\rceil$ and $h=g^r$. Then $h\in \mathfrak{g}_F^+$ is nonzero and $h^2=0$. Moreover, 
        \[\rk h=\rk g^r\ge p-r=\lfloor p/2\rfloor\ge k+1.\]
        By Proposition \ref{p_multiplicity}, every point in the $k$-dimensional linear space $\mathbb{P}(\im h)\subset \mathbb{P}V$ is a singular point of $Z(F)$ with multiplicity $d-1$. This is a contradiction.
    \end{proof}
\end{proposition}

The following proposition shows that, in order to bound $\dim G_F$, it suffices to bound $\dim_{\mathbb C}\symalg^+$ and the maximum of the ranks of elements in $\symalg^+$.

\begin{proposition}\label{p_inequality}
Let $F\in \operatorname{Sym}^d_oV^*$. For any $g\in \symalg^+$,
\[\dim G_F\le \dim_\mathbb{C}V- \rk g+\dim_\mathbb{C}\symalg^+.\]
\begin{proof} 
    We first show that if $A\subset \operatorname{End}(V)=\mathfrak{gl}(V)$ is a commutative subalgebra containing a nilpotent element of rank $r$, then the subspace of $A$ generated by diagonalizable elements in $A$ has dimension at most $\dim_\mathbb{C}V-r$.
    Let $\alpha\in A$ be a nilpotent element of rank $r$ and $\beta\in A$ be a diagonalizable element. Let $\lambda_1,...,\lambda_s$ be distinct eigenvalues of $\beta$ and consider the decomposition
        \[V=\bigoplus_{i=1}^s V_i,\]
    where $V_i$ is the eigenspace corresponding to $\lambda_i$. As $\beta$ commutes with $\alpha$, every $V_i$ is $\alpha$-invariant (i.e., $\alpha(V_i)\subset V_i$). So the restriction $\alpha|_{V_i}$ is a nilpotent operator on $V_i$, and $\rk(\alpha|_{V_i})\le \dim_\mathbb{C}V_i-1$. Thus 
        \[\sum_{i=1}^s\rk(\alpha|_{V_i})\le \sum_{i=1}^s(\dim_\mathbb{C}V_i-1)=\dim_\mathbb{C}V-s.\]
    Again, since $V_i$ are $\alpha$-invariant,
        \[\rk\alpha=\sum_{i=1}^s\rk(\alpha|_{V_i}).\]
        Thus $r=\rk\alpha\le \dim_\mathbb{C}V-s$, and therefore $s\le \dim_\mathbb{C}V-r$.
        That is, any diagonalizable element in $A$ has at most $\dim_\mathbb{C}V-r=n+1-r$ distinct eigenvalues. Now assume that there are linearly independent diagonalizable elements $\gamma_1,...,\gamma_{n+2-r}\in A$. As $A$ is abelian, $\gamma_1,...,\gamma_{n-r+2}$ are simultaneously diagonalizable. Thus one of the linear combinations of $\gamma_1,...,\gamma_{n-r+2}$ has at least $n-r+2$ distinct eigenvalues, which is absurd.

    Now consider $A=\symalg$. We have $\dim_\mathbb{C}\symalg^\times\le \dim_\mathbb{C}V-\rk g-1$, for any $g\in \symalg^+$. Hence
    \[\dim G_F=\dim_\mathbb{C}\symalg=(\dim_\mathbb{C}\symalg^\times+1)+\dim_\mathbb{C}\symalg^+\le \dim_\mathbb{C}V-\rk g+\dim_\mathbb{C}\symalg^+,\]
    for any $g\in \symalg^+$.
\end{proof}
\end{proposition}

\section{\bf Hessian of Hypersurfaces} \label{section_3}
In this section, we establish some basic properties of the Hessian of a symmetric $d$-form.

\begin{definition}\label{d_hessian}
    Let $F\in \operatorname{Sym}^dV^*$. The \textbf{Hessian} of $F$ is the symmetric pairing 
    \[\mathcal{H}_F:V\times V\to \operatorname{Sym}^{d-2}V^*\]
    given by 
    \[\mathcal{H}_F(u,w)=\partial_u\partial_w F=\partial_w\partial_u F.\]
    This pairing induces the natural homomorphism \begin{align*}
        h_F:V\to  \operatorname{Hom}_\mathbb{C}(V,\operatorname{Sym}^{d-2}V^*),
    \end{align*}
    given by $h_F(w)=\partial_w\partial_uF$.
\end{definition}

\begin{proposition} \label{p_equivalent}
    Let $g\in \operatorname{End}(V)$ and $F\in \operatorname{Sym}^d V^*$. The following are equivalent.
    \begin{enumerate}
        \item The element $g$ is a symmetrizer of $F$.
        \item For any $u,w\in V$, $\mathcal{H}_F(gu,w)=\mathcal{H}_F(u,gw)$.
    \end{enumerate}
    \begin{proof} Recall that for $u\in V$, $\partial_uF$ denotes the symmetric $(d-1)$-linear form 
    \[\partial_uF(v_1,...,v_{d-1})=F(u,v_1,...,v_{d-1}).\]
    Thus, for $u,w\in V$, $\mathcal{H}_F(u,w)$ is the symmetric $(d-2)$-form given by
    \[\mathcal{H}_F(u,w)(v_1,...,v_{d-2})=(\partial_u\partial_wF)(v_1,...,v_{d-2})=F(u,w,v_1,...,v_{d-2}).\]
        $(1)\implies (2)$: Assume that $g\in \symalg$. Then for any $u,w\in v$ and $v_1,...,v_{d-2}\in V$, 
        \begin{align*}
        \mathcal{H}_F(gu,w)(v_1,...,v_{d-2})&=(\partial_{gu}\partial_wF)(v_1,...,v_{d-2})\\
        &=F(gu,w,v_1,...,v_{d-2})\\
        &=F(u,gw,v_1,...,v_{d-2})\\
        &=(\partial_u\partial_{gw}F)(v_1,...,v_{d-2})\\
        &=\mathcal{H}_F(u,gw)(v_1,...,v_{d-2}).
        \end{align*} Hence $\mathcal{H}_F(gu,w)=\mathcal{H}_F(u,gw)$ for any $u,w\in V$.
        
        $(2)\implies (1)$: Let $g\in \operatorname{End}(V)$ be an element such that $\mathcal{H}_F(gu,w)=\mathcal{H}_F(u,gw)$ for all $u,w\in V$. Let $v_1,...,v_d\in V$. Since $\mathcal{H}_F(gv_1,v_2)=\mathcal{H}_F(v_1,gv_2)$,
        \begin{align*}
            F(gv_1,v_2,...,v_d)
            &=(\partial_{gv_1}\partial_{v_2}F)(v_3,...,v_d)\\
            &=\mathcal{H}_F(gv_1,v_2)(v_3,...,v_d)\\
            &=\mathcal{H}_F(v_1,gv_2)(v_3,...,v_d)\\
            &=(\partial_{v_1}\partial_{gv_2})F(v_3,...,v_d)\\
            &=F(v_1,gv_2,v_3,...,v_d).
        \end{align*} 
            Since $F$ is a symmetric form, and since $\mathcal{H}_F(gv_2,v_3)=\mathcal{H}_F(v_2,gv_3)$, we have
        \begin{align*}
            F(v_1,v_2,gv_3,...,v_d)&=F(gv_2,v_3,v_1,v_4,...,v_d)\\
            &=\mathcal{H}_F(gv_2,v_3)(v_1,v_4,...,v_d)\\
            &=\mathcal{H}_F(v_2,gv_3)(v_1,v_4,...,v_d)\\
            &=F(v_2,gv_3,v_1,v_4,...,v_d)\\
            &=F(v_1,v_2,gv_3,v_4,...,v_d).
        \end{align*}
        Repeating this process shows that
        \[F(gv_1,v_2,...,v_d)=F(v_1,gv_2,v_3,...,v_d)=F(v_1,v_2,gv_3,...,v_d)=\cdots=F(v_1,v_2,...,gv_d).\]
        That is, $g\in \symalg$.
    \end{proof}
\end{proposition}

\begin{remark}\label{r_Hessian_matrix}
    Fixing a basis $\mathfrak{B}=\{\xi_0,...,\xi_n\}$ of $V$, we obtain the canonical isomorphism $\theta_i:\operatorname{Sym}^iV^*\to \mathbb{C}[x_0,...,x_n]_i$ for each $i\in \mathbb{Z}^{\ge 0}$. Let $P=\theta_d(F)$ be the homogeneous polynomial corresponding to $F\in \operatorname{Sym}^dV^*$.
    Then the matrix
    \[\left(\theta_{d-2}(\mathcal{H}_F(\xi_i,\xi_j))\right)_{0\le i,j\le n}\in \mathbb{C}[x_0,...,x_n]^{(n+1)\times (n+1)}\]
    is a nonzero constant multiple of the Hessian matrix
    \[H_P=(\partial^2P/\partial x_i\partial x_j)_{0\le i,j\le n}\]
    of $P$. Thus, if $g\in \operatorname{End}(V)$ and if $A$ is the matrix representation of $g$ with respect to $\mathfrak{B}$, then by Proposition \ref{p_equivalent}, $g\in \symalg$ if and only if $H_P\cdot A$ is symmetric if and only if 
    \[A^t\cdot H_P=H_P\cdot A\in \mathbb{C}[x_0,...,x_n]^{(n+1)\times (n+1)}.\] 
    This equation already appeared in \cite[page 126]{Mammana} and \cite[page 24]{CG79}.
\end{remark}

\begin{proposition} The function $\operatorname{Sym}^d_oV^*\to \mathbb{Z}$ given by \begin{align*}
    F&\mapsto \dim_\mathbb{C}\symalg^+
\end{align*} is upper semicontinuous.
    \begin{proof}
        We work with coordinates (see Remark \ref{r_Hessian_matrix}). Let $S_d=\mathbb{C}[x_0,...,x_n]_d$. Let $\mathfrak{N}\subset \mathbb{P} (\mathbb{C}^{(n+1)\times (n+1)})$ be the closed subscheme consisting of nilpotent matrices. Let \[Z=\{(P,A)\in \mathbb{P} (S_d)\times \mathfrak{N}\mid H_P\cdot A \text{ is symmetric}\},\]
        where $H_P$ is the Hessian matrix of $P$. Clearly, $Z$ is a closed subscheme of $\mathbb{P} (S_d)\times \mathfrak{N}$ cut out by polynomials that are homogeneous in both of the coordinates of $\mathbb{P} (S_d)$ and $\mathbb{P} (\mathbb{C}^{(n+1)\times (n+1)})$. The projection map $\pi:Z\to \mathbb{P} (S_d)$ is a proper morphism, hence the function $P\mapsto \dim \pi^{-1}(P)$ is upper semicontinuous (see Theorem 12.4.3 of \cite{Vakil}). If $F\in\operatorname{Sym}^d_oV^*$ and $P\in S_d$ is the corresponding polynomial, then $\pi^{-1}(P)=\mathbb{P}\symalg^+$ (which is the empty set if $\symalg^+=0$). Thus we conclude that
        \[F\mapsto \dim \pi^{-1}(P)+1=\dim_\mathbb{C}\symalg^+\] 
        is upper semicontinuous.
    \end{proof}
\end{proposition}

\begin{definition} \label{d_r} Let $F\in \operatorname{Sym}^dV^*$ and $\xi V$. Consider the linear map 
\[h_F(\xi):V\to \operatorname{Sym}^{d-2}V^*\] 
induced by $\mathcal{H}_F$. The \textbf{rank} of $\xi$ with respect to $F$ is \begin{align*}
    \operatorname{rank}_F(\xi):=\dim_\mathbb{C}\operatorname{im}h_F(\xi).
\end{align*}
\end{definition}

\begin{remark} Let $[\xi]\in Z(F)$ be a point with multiplicity $\ge d-1$.
\begin{enumerate}
    \item Since $0\ne \xi\in \ker h_F(\xi)$, we have \[0\le\operatorname{rank}_F(\xi)=\dim_\mathbb{C}\im h_F(\xi)\le \dim_\mathbb{C}V-1=n.\]
    \item  Suppose that $F\in \operatorname{Sym}^d_oV^*$. Then $ h_F(\xi)$ is not identically zero and so 
    \[1\le \operatorname{rank}_F(\xi)=\dim _\mathbb{C}\im h_F(\xi)\le n.\]
\end{enumerate}
\end{remark}

\begin{remark}\label{r_rank_polynomial} The rank of a point can be described in terms of polynomials as follows. Let $P\in \mathbb{C}[x_0,...,x_n]_d$. \begin{enumerate} 
    \item Let $H_P$ be the Hessian matrix of $P$. $H_P$ defines the symmetric pairing \begin{align*}\mathcal{H}_P:\mathbb{C}^{n+1}\times \mathbb{C}^{n+1}&\to \mathbb{C}[x_0,...,x_n]_{d-2},\\ (u,w)&\mapsto u^t\cdot H_P\cdot w.\end{align*} For each $u\in \mathbb{C}^{n+1}$, we have the induced homomorphism $h_P(u):\mathbb{C}^{n+1}\to \mathbb{C}[x_0,...,x_n]_{d-2}$. The rank of a point $[\xi]\in \mathbb{P}^n$ is $\operatorname{rank}_P(\xi):=\dim \im h_P(\xi)$.  
    \item Let $P\in \mathbb{C}[x_0,...,x_n]$ be a homogeneous polynomial of degree $d$ and $[\xi]\in Z(P)\subset \mathbb{P}^n$ be a point of multiplicity $\ge d-1$, where $\xi=(1,0,...,0)$. Then $P=x_0Q+R$ for some homogeneous $Q,R\in \mathbb{C}[x_1,...,x_n]$ with $\deg Q=d-1$ and $\deg R=d$. Computing $H_P$, we see that $\operatorname{im} h_P(\xi)$ is equal to the degree $d-1$ component $(J_Q)_{d-1}$ of the Jacobian ideal $J_Q = (\partial Q/\partial x_1, \dots, \partial Q/\partial x_n) \subset \mathbb{C}[x_1, \dots, x_n]$ of $Q$.
    \item From (2), it follows that if $[\xi]$ is a point of the hypersurface $Z(P)\subset \mathbb{P}^n$ with multiplicity $d-1$ and $\operatorname{rank}_P(\xi)=r$ then the tangent cone to $Z(P)$ at $[\xi]$ is an affine hypersurface in $\mathbb{C}^n$ cut out by a homogeneous polynomial $Q$ of degree $d-1$ such that $\dim_\mathbb{C}(J_Q)_{d-1}=r$. Thus geometrically, $\operatorname{rank}_P(\xi)$ measures non-degeneracy of the tangent cone at $[\xi]$.
\end{enumerate}
\end{remark}

The following proposition allows us to measure the rank of points in the image of a symmetrizer.

\begin{proposition} \label{p_r}
    Let $F\in \operatorname{Sym}^dV^*$ and $g\in \symalg$. For any $\xi\in V$, \[\dim_\mathbb{C}\operatorname{im}h_F(g\xi)=\rk g-\dim_\mathbb{C}(\operatorname{im} g\cap \ker h_F(\xi)).\] In particular, $\dim_\mathbb{C}\im h_F(g\xi)\le \rk g$.
    \begin{proof}
            For $u\in V$, $h_F(g\xi)(u)=0$ if and only if $h_F(\xi)(gu)=0$ by Proposition \ref{p_equivalent}. Thus 
            \[g(\ker h_F(g\xi))=\operatorname{im}g\cap \ker h_F(\xi).\] 
            On the other hand, if $w\in \ker g$ then $h_F(g\xi)(w)=h_F(\xi)(0)=0$, hence 
            \[\ker g\subset \ker h_F(g\xi).\] 
            Therefore, we have an exact sequence
            \[0\to \ker g\to\ker h_F(g\xi)\xrightarrow{g} \im g\cap \ker h_F(\xi)\to 0.\]
            Hence 
            \[\dim_\mathbb{C}(\im g\cap \ker h_F(\xi))
            =\dim_\mathbb{C}\ker h_F(g\xi)-\dim_\mathbb{C}\ker g.\]
            Subtracting both sides from $\dim_\mathbb{C}V$ gives the required result.
        \end{proof} 
\end{proposition}

The proof of the following proposition is straightforward, but the result serves as a foundation for the proofs of Theorems \ref{t_main}, Theorem \ref{t_main_2}, and Theorem \ref{t_quasi-vertices}.

\begin{proposition} \label{p_kernel}
    Let $k\ge 1$ and $F\in \operatorname{Sym}^dV^*$. Let $g,h\in \symalg^+$ be elements with rank $k$ such that $g^2=h^2=0$. Assume that and that there is an element $\xi\in\im g\cap \im h$ such that $\operatorname{rank}_F(\xi)=k$. Then $\ker g=\ker h$.
    \begin{proof}
        Since $\xi \in \im g\cap \im h$ and $g^2=h^2=0$, we have $\ker g\subset \ker h_F(\xi)$ and $\ker h\subset \ker h_F(\xi)$. As $\operatorname{rank}_F(\xi)=k$, we have $\dim_\mathbb{C}\ker g=\dim_{\mathbb{C}}\ker h_F(\xi)=\dim_\mathbb{C}\ker h$. Hence $\ker g=\ker h_F(\xi)=\ker h$.
    \end{proof}
\end{proposition}

We close this section by showing that the presence of a certain type of singularity forces $\symalg^+$ to vanish.

\begin{corollary}\label{c_r=n}
Let $F\in \operatorname{Sym}^d_o V^*$. If there is a singular point $[\xi]\in Z(F)$ with multiplicity $d-1$ such that $\operatorname{rank}_F(\xi)=n$, then $\symalg^+=0$.
    \begin{proof} Since $\operatorname{rank}_F(\xi)=n$, $\dim_\mathbb{C}\ker h_F(\xi)=1$. As $\operatorname{mult}_{[\xi]}(F)=d-1$, $\xi\in\ker h_F(\xi)$. i.e., $\ker h_F(\xi)=\langle \xi\rangle$. Suppose that there is a nonzero element $h\in \symalg^+$. Then for some $l\ge 1$, $g:=h^l$ satisfies $g\ne 0$ and $g^2=0$. Since we are assuming that $\dim_\mathbb{C}V=n+1\ge 3$, $n>(n+1)/2$. As $g^2=0$, $(n+1)/2\ge \rk g$. Thus 
    \[\operatorname{rank}_F(\xi)=n>(n+1)/2\ge \rk g.\]
    The inequality of Proposition \ref{p_r} shows that $\xi\notin \im g$. i.e., 
    \[\im g\cap \ker h_F(\xi)=0.\]
    Since 
    \[
    F(\xi, g\xi, v_1, \dots, v_{d-2}) = F(\xi, \xi, gv_1, \dots, v_{d-2}) = 0
    \]
    for all $v_1, \dots, v_{d-2} \in V$, we have $g\xi \in \ker h_F(\xi) = \langle \xi \rangle$. But $\xi \notin \operatorname{im} g$, so $g\xi = 0$. By Proposition \ref{p_r},
    \[
    \rk g= \dim_{\mathbb{C}} \operatorname{im} h_F(g\xi) = 0,
    \]
which contradicts $g \ne 0$. Therefore, $\symalg^+ = 0$.
    \end{proof}
\end{corollary}

\begin{example}
    Let $Z(F)\subset \mathbb{P} V$ be a cubic hypersurface. By Remark \ref{r_rank_polynomial}, if $[\xi]\in Z(F)$ is a nodal singularity (i.e., a singularity locally analytically equivalent to the origin of $(x_1^2+\cdots+x_n^2=0)\subset \mathbb{C}^n$) then 
    \[\operatorname{rank}_F(\xi)=\dim_\mathbb{C}\langle x_1,...,x_n \rangle=n.\] 
    By Corollary \ref{c_r=n}, $\symalg^+=0$ whenever $Z(F)$ has a nodal singularity.
\end{example}

\section{\bf Quasi-vertices}\label{section_4}

In this section, we investigate the behavior of quasi-vertices. Throughout this section, we fix a symmetric form $F \in \operatorname{Sym}^d V^*$. Before proceeding, we remark that the case $m=0$ of the following theorem of Mammana implies that the symmetrizer group of a hypersurface $Z(F)$ admitting a quasi-vertex is nontrivial.

\begin{theorem}[{\cite[page 134]{Mammana}}]\label{t_Mammana}
    Let $F\in \operatorname{Sym}^d_oV^*$ and $P\in \mathbb{C}[x_0,...,x_n]$ be the corresponding homogeneous polynomial. Then $G_F\ne \mathbb{C}\cdot \operatorname{Id}_V$ if and only if either
    \begin{itemize}
        \item $P$ is projectively equivalent to a polynomial $P_1+P_2$ where $P_1\in \mathbb{C}[x_0,...,x_r]_d$ and $P_2\in \mathbb{C}[x_{r+1},...,x_n]_d$ with $0\le r<n$, or
        \item $P$ is projectively equivalent to a polynomial of the form
        \[\sum_{i=0}^{m}x_i\frac{\partial Q}{\partial x_{i+m+1}}+R,\]
        where $Q(x_{m+1},...,x_{2m+1})$, $R(x_{m+1},...,x_n)$ are homogeneous of degree $d$ and $m\le (n-1)/2$.
    \end{itemize}
\end{theorem}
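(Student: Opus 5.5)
The statement is cited from Mammana. We would reprove it using the algebra structure of $\symalg$ from Proposition \ref{p_Hwang_2.2}. Since $\symalg$ is a commutative subalgebra of $\operatorname{End}(V)$ containing $\operatorname{Id}_V$, we have $G_F\ne\mathbb{C}^\times\cdot\operatorname{Id}_V$ if and only if $\dim\symalg\ge 2$. The plan is to split into two cases according to whether $\symalg^\times\neq 0$ or $\symalg^+\ne 0$. The first case should give the direct-sum form and the second the form $\sum x_i\,\partial Q/\partial x_{i+m+1}+R$. For each case we prove both directions.

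\textbf{Semisimple case.} Since $\symalg$ is an associative subalgebra closed under polynomials in its elements, it contains the spectral projections of any semisimple $s\in\symalg$. So if $\symalg^\times\ne0$, there is an idempotent $e\in\symalg$ with $e\ne 0,\operatorname{Id}_V$. Write $V=\im e\oplus\ker e$. For $u\in\im e$ and $w\in\ker e$ we get
\[F(u,w,v_3,\dots,v_d)=F(eu,w,\dots)=F(u,ew,\dots)=0.\]
Hence all mixed terms vanish, and in adapted coordinates $P=P_1(x_0,\dots,x_r)+P_2(x_{r+1},\dots,x_n)$ with $0\le r<n$. Conversely, for such a $P$ the projection onto the first summand visibly satisfies the symmetrizer condition; one checks that $H_P$ is block diagonal, as in Remark \ref{r_Hessian_matrix}.

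\textbf{Nilpotent case.} If $\symalg^+\ne0$, replacing a nilpotent element by a suitable power gives $g\in\symalg^+$ with $g\ne0$ and $g^2=0$; let $\rk g=m+1$, so $2(m+1)\le n+1$. Choose $\zeta_0,\dots,\zeta_m$ with $\eta_i:=g\zeta_i$ a basis of $\im g$, and complete to a basis of $V$ by a complement $W$ of $\im g$ inside $\ker g$. Use coordinates $x_0,\dots,x_m$ dual to the $\eta_i$, coordinates $x_{m+1},\dots,x_{2m+1}$ dual to the $\zeta_i$, and the rest dual to $W$. There are three steps.
\begin{enumerate}
\item Since $F(\eta_i,\eta_j,\ldots)=F(g^2\zeta_i,\zeta_j,\ldots)=0$, the polynomial $P$ has degree at most one in $x_0,\dots,x_m$. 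So $P=\sum_i x_i L_i+R$, where $L_i,R$ involve only $x_{m+1},\dots,x_n$.
\item The coefficient $L_i$ corresponds to $\partial_{\eta_i}F=\partial_{\zeta_i}F^g$, up to a constant. The form $F^g$ is symmetric and vanishes whenever an argument lies in $\ker g$, which contains $\im g$ and $W$. Hence $F^g$ is a degree $d$ form $Q$ in $x_{m+1},\dots,x_{2m+1}$ alone, and $L_i$ is a constant multiple of $\partial Q/\partial x_{i+m+1}$.
\item After rescaling and renaming coordinates, this is exactly Mammana's second normal form with $m\le (n-1)/2$.
\end{enumerate}
For the converse, given such a $P$, define $g$ by $g(e_{i+m+1})=e_i$ for $0\le i\le m$ and $g=0$ on the other basis vectors. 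We then verify $H_P A=A^tH_P$ by a direct computation with second partials: the only nonzero blocks involve $\partial^2 Q$, which is symmetric.

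\textbf{Expected obstacle.} The main difficulty should be the bookkeeping in step 2. The points to be careful about are the identification of $\partial_{\eta_i}F$ with a derivative of $F^g$, the correct constant, and the fact that $R$ and $Q$ may share the variables $x_{m+1},\dots,x_{2m+1}$. We also need to confirm that $F^g$ factors through $V/\ker g\cong \langle\zeta_0,\dots,\zeta_m\rangle$, which follows from symmetry of $F^g$ together with $F^g(v,\dots)=0$ for $v\in\ker g$.
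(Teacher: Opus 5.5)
Your proposal is correct. The case split, the nontrivial idempotent that kills all mixed terms, the identification of the coefficients $L_i$ with $\partial Q/\partial x_{i+m+1}$ for $Q(v)=F(gv,v,\dots,v)$, and the converse check that $H_PA$ is symmetric all go through.

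The paper gives no proof of this theorem. It is quoted from Mammana and used only as background, so there is no internal argument to match. Your two halves do line up with tools the paper uses elsewhere:
\begin{itemize}
\item The semisimple half is the equivalence ``Sebastiani--Thom type iff $\symalg^\times\neq 0$'' that the paper imports from Hwang in Corollary \ref{c_st}.
\item The nilpotent half is the rank-$(m+1)$ version of Lemma \ref{l_r at most 1}. The symmetrizer $\xi_0\otimes\xi_n^*$ constructed there is your $g$ for $m=0$.
\item Your Step 1 vanishing $F(g\zeta_i,g\zeta_j,\dots)=F(g^2\zeta_i,\zeta_j,\dots)=0$ is the computation of Proposition \ref{p_multiplicity}. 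It shows that $\mathbb{P}(\im g)=\{x_{m+1}=\cdots=x_n=0\}$ is a linear space of points of multiplicity at least $d-1$.
\end{itemize}
Your self-contained route buys an intrinsic reading of the normal form. The integer $m+1$ is the rank of a square-zero symmetrizer, and $Q$ is the form $F^g$ itself.

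Two minor remarks. First, your Step 2 already gives $L_i=\partial Q/\partial x_{i+m+1}$ exactly, since both sides equal $d\,F^g(\zeta_i,v,\dots,v)$. So no rescaling is needed in Step 3. Second, nothing in your argument uses commutativity of $\symalg$ or the non-cone hypothesis. It only uses that $\symalg$ is a unital subalgebra of $\operatorname{End}(V)$, which holds for $d\ge 3$. If you phrase the case split via the Jordan decomposition of a single non-scalar element of $\symalg$, rather than via $\symalg/\mathbb{C}\operatorname{Id}_V=\symalg^\times\oplus\symalg^+$, the argument proves the equivalence for every $F\in\operatorname{Sym}^dV^*$. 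This works because the semisimple and nilpotent parts of that element are polynomials in it, hence lie in $\symalg$.
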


We begin by describing quasi-vertices in terms of the notion $\operatorname{rank}_F(\xi)$.

\begin{lemma}\label{l_linear_change_r}\begin{enumerate}
    \item Let $P\in \mathbb{C}[x_0,...,x_m]$ be a homogeneous polynomial and $r$ the dimension of the vector space generated by the partial derivatives of $P$. After a linear change of coordinates, $P$ is a polynomial in $r$ variables. 
    \item Let $[\xi]\in Z(F)$ be a point such that $\operatorname{rank}_F(\xi)\le 1$. Then $\operatorname{mult}_{[\xi]}(F)\ge d-1$.
\end{enumerate} 
    \begin{proof}
     (1) As this is a well-known fact, we only sketch the proof (see Proposition 1.1.6 of \cite{Dolgachev}). If $r=m+1$, we are done. If not, $Z(P)\subset \mathbb{P}^{m}$ is a cone over a hypersurface in $\mathbb{P}^{m-1}$. So after a linear change of coordinates, $P$ is a polynomial in $m$ variables. Repeating this completes the proof.

     (2) Suppose not. As $\operatorname{mult}_{[\xi]}(F)\ge d-1$ if and only if $\xi\in \ker h_F(\xi)$, we have $\xi\notin \ker h_F(\xi)$ and so $V=\ker h_F(\xi)\oplus\langle \xi\rangle$. Let $v_1,...,v_{d-2}\in V$ be elements such that \[F(\xi,\xi,v_1,...,v_{d-2})\ne0.\] We can write $v_i=u_i+c_i\xi$ for some $u_i\in \ker h_F(\xi)$ and $c_i\in \mathbb{C}$, so that 
        \begin{align*}
            0\ne F(\xi,\xi,v_1,...,v_{d-2})=c_1\cdots c_{d-2} F(\xi,...,\xi).
        \end{align*}
     However, since $[\xi]\in Z(F)$, $F(\xi,...,\xi)=0$ . This is a contradiction.
    \end{proof}
\end{lemma}

\begin{lemma}\label{l_r_iff_quasi-vertex}
    Let $F\in \operatorname{Sym}^dV^*$ and $[\xi]\in Z(F)$.
    \begin{enumerate}
        \item The point $[\xi]$ is a quasi-vertex of $Z(F)$ if and only if $\operatorname{rank}_F(\xi)=1$.
        \item The hypersurface $Z(F)$ is a cone with vertex $[\xi]$ if and only if $\operatorname{rank}_F(\xi)=0$.
    \end{enumerate}
    \begin{proof}
        (1) We work with the projective coordinate $(x_0:\cdots:x_n)$ of $\mathbb{P}^n=\mathbb{P} V$ and with the homogeneous polynomial $P\in \mathbb{C}[x_0,...,x_n]$ of degree $d$, corresponding to $F$. We may assume that $[\xi]=(1:0:\cdots:0)$. By Lemma \ref{l_linear_change_r}, $[\xi]$ is a singularity of multiplicity $\ge d-1$, so we can write 
        \[P=x_0Q+R\]
        for some $Q\in \mathbb{C}[x_1,...,x_n]_{d-1}$ and $R\in \mathbb{C}[x_1,...,x_n]_d$. Moreover, 
        \[\operatorname{rank}_F(\xi)=\dim_\mathbb{C}\langle \partial Q/\partial x_1,...,\partial Q/\partial x_n\rangle.\]
        Assume that $\operatorname{rank}_F(\xi)=1$. By Lemma \ref{l_linear_change_r}(1), $Q=L^{d-1}$ for some nonzero linear form $L$. Thus $[\xi]$ is a quasi-vertex. Conversely, assume that $[\xi]$ is a quasi-vertex. Then $Q=L^{d-1}$ for some nonzero linear form $L$, so \[\operatorname{rank}_F(\xi)=\dim_\mathbb{C}\langle\partial L^{d-1}/\partial x_1,...,\partial L^{d-1}/\partial x_n \rangle=1.\]

        (2) The hypersurface $Z(F)$ is a cone with vertex $[\xi]$ if and only if $\operatorname{mult}_x(F)=d$ if and only if $\ker h_F(\xi)=V$ if and only if $\operatorname{rank}_F(x)=0$.
    \end{proof}
\end{lemma}

\begin{proposition} \label{p_quasi-vertex_mult} If $[\xi]\in Z(F)$ is a quasi-vertex and $[\eta]\in Z(F)$ is a point of multiplicity $m\ge 2$, then every point on the line joining $[\xi]$ and $[\eta]$ is a singular point of $Z(F)$ with multiplicity at least $m$.
\begin{proof}
   Let $L=\mathbb{P}\langle \xi,\eta\rangle\subset \mathbb{P}V$ and $[a\xi+b\eta]\in L$, where $(a:b)\in \mathbb{P}^1$. For any $v_1,...,v_{m-1}\in V$,
        \begin{align*}
        &F(\underbrace{a\xi+b\eta,...,a\xi+b\eta}_{d-m+1},v_1,...,v_{m-1})\\
        &=\sum_{i=0}^{d-m+1}\binom{d-m+1}{i}a^ib^{d-m+1-i}F(\underbrace{\xi,...,\xi}_{i},\underbrace{\eta,...,\eta}_{d-m+1-i},v_1,...,v_{m-1})\\
        &=(d-m+1)ab^{d-m}F(\xi,\underbrace{\eta,...,\eta}_{d-m},v_1,...,v_{m-1})+b^{d-m+1}F(\underbrace{\eta,...,\eta}_{d-m+1},v_1,...,v_{m-1})\\
        &=(d-m+1)ab^{d-m}F(\xi,\underbrace{\eta,...,\eta}_{d-m},v_1,...,v_{m-1}).
        \end{align*} If $\eta\in \ker h_F(\xi)$, then this vanishes. Suppose that $\eta\notin \ker h_F(\xi)$. By Lemma \ref{l_r_iff_quasi-vertex}, $\operatorname{rank}_F(\xi)=1$. Thus $\dim_\mathbb{C}\ker h_F(\xi)= n$ and $V=\ker h_F(\xi)\oplus \langle \eta\rangle$. Write $v_1=u+c\eta$ for some $u\in \ker h_F(\xi)$ and $c\in \mathbb{C}$. As $\operatorname{mult}_{[\eta]}(F)=m$, 
        \begin{align*}
            F(\xi,\eta,...,\eta,v_1,...,v_{m-1})&=F(\xi,\eta,...,\eta,u,v_2,...,v_{m-1})+F(\xi,\eta,...,\eta,c\eta,v_2,...,v_{m-1})\\
            &=cF(\eta,...,\eta,\eta,\underbrace{\xi,v_2,...,v_{m-1}}_{m-1})=0.
        \end{align*} Thus every point on $L$ has multiplicity at least $m$ in $Z(F)$.
\end{proof}
\end{proposition}

This immediately implies:

\begin{corollary}
    Let $X\subset \mathbb{P}^n$ be a hypersurface with isolated singularities. If $X$ has a quasi-vertex, then it is the unique singular point of $X$.
\end{corollary}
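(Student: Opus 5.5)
The statement follows from Proposition \ref{p_quasi-vertex_mult} with $m=2$. Let $X=Z(F)$ have isolated singularities, and let $[\xi]\in X$ be a quasi-vertex. First I check that $[\xi]$ is itself a singular point. By Lemma \ref{l_r_iff_quasi-vertex}(1) we have $\operatorname{rank}_F(\xi)=1$. Lemma \ref{l_linear_change_r}(2) then gives $\operatorname{mult}_{[\xi]}(F)\ge d-1\ge 2$, using the standing assumption $d\ge 3$. So $[\xi]$ lies in the singular locus.

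Now suppose, for contradiction, that $X$ has a singular point $[\eta]\ne[\xi]$. Its multiplicity is some $m\ge 2$. Proposition \ref{p_quasi-vertex_mult} says that every point on the line $\mathbb{P}\langle\xi,\eta\rangle$ is a singular point of $X$ of multiplicity at least $m\ge 2$.

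This line is a positive-dimensional subvariety of $\operatorname{Sing}(X)$. That contradicts the assumption that the singularities of $X$ are isolated. Hence $[\xi]$ is the only singular point of $X$.

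The argument is short, and the only point that needs care is the first step: being a quasi-vertex really does make $[\xi]$ singular. The cubic case is already covered by $d\ge 3$. If degenerate readings are a concern, a point of multiplicity $1$ has a tangent cone that is a hyperplane, which is not of the form $L^{d-1}$ with $d-1\ge 2$, so a quasi-vertex always has multiplicity at least $2$.
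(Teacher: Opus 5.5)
Your proof is correct and is essentially the paper's argument. The paper states the corollary as an immediate consequence of Proposition \ref{p_quasi-vertex_mult}: a second singular point of multiplicity $m\ge 2$ forces the whole joining line into the singular locus. Your added check that the quasi-vertex is itself singular (multiplicity $d-1\ge 2$) just makes explicit a detail the paper leaves implicit.
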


Now we prove Theorem \ref{t_quasi-vertices}. For this, we require one more lemma, which can also be derived from Mammana's proof of Theorem \ref{t_Mammana}.

\begin{lemma}\label{l_r at most 1} 
    Let $[\xi_0]\in Z(F)$. Then there is a nilpotent $g\in \symalg^+$ such that $\im g=\langle \xi_0\rangle$ if and only if  $\operatorname{rank}_F(\xi_0)\le 1$.
 \begin{proof}
     Assume that $\im g=\langle \xi_0\rangle$ for some nilpotent $g\in \symalg$ of rank $1$. As $\rk g=1$, by Proposition \ref{p_r}, $\operatorname{rank}_F(\xi_0)\le 1$. Conversely, assume that $\operatorname{rank}_F(\xi_0)\le 1$. Then $\dim_\mathbb{C}\ker h_F(\xi_0)\ge n$. By Lemma \ref{l_linear_change_r}(2), $\operatorname{mult}_{[\xi_0]}(F)\ge d-1$ and so $\xi_0\in \ker h_F(\xi_0)$. Thus, there is a basis $\{\xi_0,...,\xi_n\}$ of $V$ such that $\xi_0,...,\xi_{n-1}\in \ker h_F(\xi_0)$. Consider the endomorphism 
     \[g:=\xi_0\otimes \xi_n^*:V\to V,\quad v\mapsto \xi_n^*(v)\cdot \xi_0,\] where $\xi_n^*$ is the dual vector of $\xi_n$. We have $g^2=0$ and $\im g=\langle \xi_0\rangle$. We now show that $g\in \symalg$. By Proposition \ref{p_equivalent}, it is enough to show that 
    \[\mathcal{H}_F(g\xi_i,\xi_j)=\mathcal{H}_F(\xi_i,g\xi_j), \text{ for }i,j=0,...,n.\]
    If one of $i,j$ is at most $n-1$, then both sides of the equation are $0$. If $i=j=n$, then the equality holds because $\mathcal{H}_F$ is symmetric. Hence $g\in \symalg$.
 \end{proof}
\end{lemma}

\smallskip

\begin{proof}[Proof of Theorem \ref{t_quasi-vertices}]
    (1) Let $F\in \operatorname{Sym}^d_oV^*$. Our aim is to show that the map
    \begin{align*}
    \{g\in \mathbb{P}\symalg^+\mid \rk g=1\}&\to \{\text{quasi-vertices of }Z(F)\}\\
    g&\mapsto \mathbb{P}(\im g)
    \end{align*} is bijective. Note that this map is well-defined, since if $g\in \symalg^+$ has rank $1$, then $\mathbb{P}(\im g)$ consists of one point $[\xi]$ such that $\operatorname{rank}_F(\xi)=1$. The surjectivity follows immediately from Lemma \ref{l_r_iff_quasi-vertex} and Lemma \ref{l_r at most 1}: as $Z(F)$ is not a cone, $[\xi]\in Z(F)$ is a quasi-vertex if and only if $\operatorname{rank}_F(\xi)=1$ if and only if $[\xi]=\mathbb{P} (\im g)$ for some $g\in \symalg^+$. 
    
    To show that the map is injective, suppose that $g,h\in \symalg^+$ are two elements of rank $1$ such that $\mathbb{P}(\im g)=\mathbb{P}(\im h)$. Write $\im g=\im h=\langle \xi\rangle$. By Proposition \ref{p_r} and Lemma~\ref{l_r_iff_quasi-vertex}, $\operatorname{rank}_F(\xi)=1$. As $g$ and $h$ are nilpotent with rank 1, $g^2=h^2=0$. By Proposition \ref{p_kernel}, $\ker g=\ker h$. Hence $g$ and $h$ are proportional. This proves (1).
    
    (a)$\implies$(b): Choose any $0\ne h\in \symalg^+$ and take its power $g:=h^{\ell}$ so that $g\ne 0$ and $g^2=0$. By Proposition \ref{p_multiplicity}, every point in $\mathbb{P}(\im g)$ is a singularity of $Z(F)$ with multiplicity $d-1$. Since the locus of points in $Z(F)$ with multiplicity $d-1$ does not contain a line, $\rk g=1$. By (1), $[\xi]:=\mathbb{P}(\im g)$ is a quasi-vertex of $Z(F)$. If there is another point $[\eta]\ne [\xi]$ in $Z(F)$ with multiplicity $d-1$, then by Proposition \ref{p_quasi-vertex_mult}, the line joining $[\xi]$ and $[\eta]$ has multiplicity $d-1$. Hence $[\xi]$ is the unique point of multiplicity $d-1$.
    
    (b)$\implies$(c): Trivial.
    
    (c)$\implies$(a): This follows from (1). This proves (2).
\end{proof}

The following result will be needed in the proof of Theorem \ref{t_main_2}.

 \begin{proposition}\label{p_quasi-vertices_conic}
    Let $F\in \operatorname{Sym}^d_oV^*$. A line in $Z(F)$ contains at most two quasi-vertices of $Z(F)$.
    \begin{proof}
        We work with coordinates. Let $P$ be the corresponding polynomial. Say $x=(1:0:\cdots:0)$ and $y=(0:1:0:\cdots:0)$ are quasi-vertices. Then we can write 
        \[P=x_0L_0^{d-1}+x_1L_1^{d-1}+R(x_2,...,x_n)\]
        for some linear forms $L_i\in \mathbb{C}[x_2,...,x_n]$ and homogeneous $R$ of degree $d$. As $Z(F)$ is not a cone, $L_0,L_1$ are linearly independent. Consider a point $z=(1:t:0:\cdots:0)$, where $t\in \mathbb{C}^\times$. Taking the coordinate change $x_1\mapsto tx_0+x_1$, we see that the tangent cone at $z$ is cut out by $L_0^{d-1}+tL_1^{d-1}$ and this must be a power of a linear form if $z$ is a quasi-vertex. As $L_0, L_1$ are linearly independent linear forms, this is impossible.
    \end{proof}
\end{proposition}

\begin{remark}
    There exists a hypersurface $Z(F)$ that is not a cone and has infinitely many quasi-vertices. Consider the cubic threefold $X\subset \mathbb{P} ^4$ defined by $P=x_0x_3^2+x_1x_4^2+x_2(x_3+x_4)^2$ and the corresponding symmetric $d$-form $F$. As the partial derivatives of $P$ are linearly independent, $X$ is not a cone. The locus of quasi-vertices of $Z(F)$ is a smooth plane conic: every point $(a:b:c:0:0)\in \mathbb{P} ^4$ such that $ab+bc+ac=0$ is a quasi-vertex of $X$.
\end{remark}

\begin{proposition}\label{p_quasi-vertex_stabilized}
    Let $F\in\operatorname{Sym}^d_oV^*$.
    \begin{enumerate}
        \item If $[\xi]\in Z(F)$ is a quasi-vertex, then $\xi$ is an eigenvector of $g$ for any $g\in \symalg$. In particular, $G_F$ stabilizes $[\xi]$: $[g\xi]=[\xi]$ for all $g\in G_F$.
        \item If $g\in \symalg^+$ is an element of rank $1$, then for any $f\in \symalg^+$, $fg=0$.
    \end{enumerate}
    \begin{proof}
        (1) Let $g\in \symalg$. Choose general $(a:b)\in \mathbb{P}^1$ so that the symmetrizer
        \[\varphi_{a,b}:=a\cdot\operatorname{Id}_V+bg\]
        is invertible. By Proposition \ref{p_r},
        \[\operatorname{rank}_F(\varphi_{a,b}(\xi))=\dim_\mathbb{C}V-\dim_\mathbb{C}\ker h_F(\xi)=1.\]
        By Lemma \ref{l_r_iff_quasi-vertex}(1), $\varphi_{a,b}(\xi)=a\xi+bg\xi$ is a quasi-vertex. So if $g\xi \notin \langle \xi \rangle$, then general points of the line $\mathbb{P}\langle \xi, g\xi \rangle \subset \mathbb{P}V$ are quasi-vertices of $Z(F)$. But this is impossible by Proposition~\ref{p_quasi-vertices_conic}.

        (2) Let $g\in \symalg^+$ be an element of rank $1$ and write $\im g=\langle v\rangle$. By Theorem \ref{t_quasi-vertices}, $[v]$ is a quasi-vertex of $Z(F)$. Let $f\in \symalg^+$ be any element. Since $f$ is nilpotent, $fv=0$ by (1). Thus $fg=0$.
    \end{proof}
\end{proposition}

 The vertex $z$ of a cone $Z\subset \mathbb{P}^n$ can be characterized as the singularity such that the locus of lines on $Z$ passing through $z$ is $Z$ itself. A quasi-vertex has an analogous characterization:

\begin{proposition} \label{p_quasi-vertex_lines through x} Let $[\xi]\in Z(F)\subset \mathbb{P}V$ and $n\ge 3$. Then $[\xi]$ is a quasi-vertex of $Z(F)\subset \mathbb{P}V$ if and only if the locus $\Lambda$ of lines in $Z(F)$ passing through $[\xi]$ is a (set-theoretic) hyperplane section of $Z(F)$.
    \begin{proof}
         Assume that $[\xi]$ is a quasi-vertex of $Z(F)$. By Lemma \ref{l_r_iff_quasi-vertex}(1), $\operatorname{rank}_F(\xi)=1$ and so $\ker h_F(\xi)$ is $n$-dimensional. i.e., $\mathbb{P} (\ker h_F(\xi))\subset \mathbb{P} V$ is a hyperplane. Consider the hyperplane section $D:=\mathbb{P} (\ker h_F(\xi))\cap Z(F)$. We show that $D=\Lambda$. Let $[\eta]\in D$ be a point distinct from $[\xi]$. Since $\eta\in \ker h_F(\xi)$ and $[\eta]\in Z(F)$,
        \begin{align*}
            F(a\xi+b\eta,...,a\xi+b\eta)&=\sum_{i=0}^d\binom{d}{i}a^ib^{d-i}F(\underbrace{\xi,...,\xi}_{i},\underbrace{\eta,...,\eta}_{d-i})\\
            &=d\cdot ab^{d-1}F(\xi,\eta,...,\eta)+b^dF(\eta,...,\eta)=0,
        \end{align*} for all $a,b\in \mathbb{C}$. Hence the whole line $\mathbb{P} \langle \xi,\eta\rangle$ is contained in $Z(F)$, and in particular, $[\eta]\in \Lambda$. Conversely, let $L$ be a line in $Z(F)$ passing through $[\xi]$. Suppose that there is a point $[\eta]\in L$ that is not in $D$. Since $F(\xi+\eta,...,\xi+\eta)=0$, since $\mathcal{H}_F(\xi,\xi)=0$ and $F(\eta,...,\eta)=0$, we have $F(\xi,\eta,...,\eta)=0$. As $\eta\notin \ker h_F(\xi)$, there are $v_1,...,v_{d-2}\in V$ such that $F(\xi,\eta,v_1,...,v_{d-2})\ne 0$. As $\operatorname{rank}_F(\xi)=1$, we have $V=\ker h_F(\xi)\oplus \langle \eta \rangle$, so we can write $v_i=u_i+c_i\eta$ for some constants $c_i$. Thus 
        \begin{align*}F(\xi,\eta,v_1,...,v_{d-2})
        =c_1\cdots c_{d-2}F(\xi,\eta,\eta,...,\eta)=0,
        \end{align*} a contradiction. Hence $L\subset D$ and $D=\Lambda$.

        For the converse, assume that $H=\mathbb{P} W\subset \mathbb{P} V$ is a hyperplane such that $\Lambda=H\cap Z(F)$ is the locus of lines passing through $[\xi]$, where $W\subset V$ is an $n$-dimensional subspace. Let $G:=F|_{W}$ be the restriction of $F$. As the locus of lines in $\Lambda=Z(G)\subset \mathbb{P} W$ passing through $[\xi]$ is $\Lambda$ itself, $\Lambda$ is a cone with vertex $[\xi]$. Thus $W=\ker h_G(\xi)\subset \ker h_F(\xi)$. If $Z(F)$ is a cone with vertex $[\xi]$ then the locus of lines in $Z(F)$ passing through $Z(F)$ must be $Z(F)$. But as the locus is a hyperplane section, $\dim_\mathbb{C}\ker h_F(\xi)<\dim_\mathbb{C}V$, hence $\ker h_F(\xi)=W$. i.e., $\operatorname{rank}_F(x)=1$. We are done by Lemma \ref{l_r_iff_quasi-vertex}(1).
    \end{proof}
\end{proposition}

\section{\bf{Dimension Bounds}} \label{section_5}

In this section, we prove Theorem \ref{t_main} and Theorem \ref{t_main_2}. We begin by establishing several foundational results that will serve as the basis for the proof. Throughout the section, we fix a symmetric $d$-form $F\in \operatorname{Sym}^d_oV^*$ and consider the Artinian local ring 
\[R_F:=\mathbb{C}\oplus\symalg^+\]
with the maximal ideal $\maxid:=\symalg^+$.

\begin{notation} Throughout this section, we use the following notations.
    \begin{enumerate}
        \item The locus of points in $Z(F)$ with multiplicity $d-1$ is denoted by $\Delta_F$.
        \item For each $q\in \mathbb{Z}^{>0}$, we set $N_F^q:=\{g\in \mathbb{P}\symalg^+\mid g^q=0\}$. This is a closed subset of $\mathbb{P}\symalg^+$.
        \item For each $q\in \mathbb{Z}^{>0}$, we set $\widehat{N}_F^q:=\{g\in \symalg^+\mid g^q=0\}$. This is the affine cone over $N_F^q\subset \mathbb{P}\symalg^+$ with vertex $0\in \symalg^+$.
    \end{enumerate}
\end{notation}

\begin{proposition}\label{p_bound_for_Pm}
    For any positive integer $q$,
    \[\dim_\mathbb{C}\maxid\le\dim N_F^q+\dim_\mathbb{C}\maxid^q+1.\]
    \begin{proof}
        Let $f_1,...,f_r$ be a basis of $\maxid^q$. Consider the symmetric $q$-forms
        \[\mu_i:\underbrace{\maxid\times\cdots\times\maxid}_{q}\xrightarrow{\mu} \maxid^q\xrightarrow{f_i^*} \mathbb{C},\]
        where $\mu$ is the multiplication map and $f_i^*\in (\maxid^q)^*$ is the dual vector of $f_i$. Then the closed subset
        \[(\mu_1=\cdots=\mu_r=0)\subset \mathbb{P}\maxid\] 
        is equal to $N_F^q$. Hence
        \[\dim N_F^q\ge \dim\mathbb{P}\maxid-r=\dim_\mathbb{C}\maxid-1-r,\]
        as required.
    \end{proof}
\end{proposition}

\begin{proposition}\label{p_minimal_p} 
Let $p$ be the minimal integer such that $\maxid^{p+1}=0$. Then 
\begin{enumerate}
    \item If $p=1$, then $\dim_\mathbb{C}\maxid\le \dim N_F^2+1$.
    \item If $p\in \{2,3\}$, then $\dim_\mathbb{C}\maxid\le 2(\dim N_F^2+1)$.
\end{enumerate}
\begin{proof}
(1) If $p=1$ then $\mathbb{P}\maxid=N_F^2$. Thus $\dim_\mathbb{C}\maxid\le \dim N_F^2+1$.

(2) If $p\in \{2,3\}$ then $\mathbb{P}\maxid^2\subset N_F^2$, hence $\dim_\mathbb{C}\maxid^2\le \dim N_F^2+1$. By Proposition~\ref{p_bound_for_Pm}, $\dim_\mathbb{C}\maxid\le 2 (\dim N_F^2+1)$.
\end{proof}
\end{proposition}

We now estimate the minimal integer $p$ such that $\maxid^{p+1}\subset R_F$ vanishes.

\begin{lemma}\label{l_power_vanishes}
    Let $A$ be a commutative $\mathbb{C}$-algebra, $q\ge 1$ and $\idvar\subset A$ be an ideal such that $a^q=0$ for all $a\in \idvar$. Then $\idvar^q=0$.
    \begin{proof}
        It is enough to show that $a_1\cdots a_q=0$ for any $a_1,...,a_q\in \idvar$. Let 
        \[P=(a_1x_1+\cdots+a_qx_q)^q\in A[x_1,...,x_q].\] 
        By assumption, for any $(t_1,...,t_q)\in \mathbb{C}^q$,
            \[0=P(t_1,...,t_q)=\sum a_{i_1}\cdots a_{i_q}t_{i_1}\cdots t_{i_q}.\]
        Since $\mathbb{C}$ is infinite, $P$ is the zero polynomial. Since the coefficient of $x_1\cdots x_q$ in $P$ is $q! a_1\cdots a_q$, we conclude that $a_1\cdots a_q=0$.
    \end{proof}
\end{lemma}

\begin{proposition}\label{p_power_vanishes}
    Assume that $\Delta_F$ does not contain a linear subspace of $\mathbb{P}V$ of dimension $k$. Then $\maxid^{2k+1}=0$.
    \begin{proof}
        This is a direct consequence of Proposition \ref{p_power} and Lemma \ref{l_power_vanishes}.
    \end{proof}
\end{proposition}

\subsection{Structure of $N_F^2$} The locus $N_F^2$ encodes subtle geometric information about $\Delta_F$. Assuming that $\Delta_F$ has only finitely many projective lines, we make this relationship explicit by describing the irreducible components of $N_F^2$ in terms of lines in $\Delta_F$.

To describe the structure of $N_F^2$, we introduce the following definition.

\begin{definition}  
    A linear subspace $L\subset \mathbb{P}V$ is \textbf{generated by a symmetrizer of $F$}, if there is $g\in \symalg^+$ such that $g^2=0$ and $\mathbb{P}(\im g)=L$.
\end{definition}

\begin{theorem}\label{t_component}
    Assume that $\Delta_F$ has only finitely many lines.
    \begin{enumerate}
        \item Every irreducible component of $N_F^2$ is either a point or a line.
        \item If there is a line generated by a symmetrizer of $F$, then there is a one-to-one correspondence
        \[\{\text{(set-theoretic) irreducible components of $N_F^2$}\}\longleftrightarrow\{\text{lines generated by symmetrizers of $F$}\}.\]
    \end{enumerate}
\end{theorem}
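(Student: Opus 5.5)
The plan is to stratify $N_F^2$ by rank and show that it is a finite union of projective linear spaces of dimension at most $1$, one for each line generated by a symmetrizer.

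\emph{Step 1: ranks.} Let $g\in\maxid$ be nonzero with $g^2=0$. By Proposition \ref{p_multiplicity}, $\mathbb{P}(\im g)\subset\Delta_F$. A plane contains infinitely many lines and $\Delta_F$ contains only finitely many, so $\rk g\le 2$.
\begin{itemize}
\item If $\rk g=1$, then $\mathbb{P}(\im g)$ is a quasi-vertex by Theorem \ref{t_quasi-vertices}(1).
\item If $\rk g=2$, then $\mathbb{P}(\im g)$ is one of the finitely many lines of $\Delta_F$.
\end{itemize}
For each line $\ell=\mathbb{P}W$ generated by a symmetrizer, define the linear subspace
\[L_W:=\{g\in\maxid\mid \im g\subset W\}.\]

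\emph{Step 2: $\mathbb{P}L_W\subset N_F^2$.} I will show $gh=0$ for all $g,h\in L_W$.
\begin{itemize}
\item If either factor has rank $1$, then $gh=hg=0$ by Proposition \ref{p_quasi-vertex_stabilized}(2) and commutativity.
\item If both have rank $2$, then $\im h=W=\im g\subset\ker g$, since $g^2=0$. Hence $gh=0$.
\end{itemize}
Thus every element of $L_W$ squares to zero, so $\mathbb{P}L_W\subset N_F^2$.

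Moreover, every element of $L_W$ kills $W$. Fix a rank-$2$ element $h_0$ with $\im h_0=W$. A rank-$1$ element $g$ satisfies $gh_0=0$, and a rank-$2$ element $g$ has $W=\im g\subset\ker g$.

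\emph{Step 3 (main obstacle): $\dim_\mathbb{C}L_W\le 2$.} First, take $\xi\in W$ general. By Proposition \ref{p_r}, $\operatorname{rank}_F(\xi)\le 2$. I expect equality for general $\xi$: otherwise every point of $\ell$ would be a quasi-vertex, contradicting Proposition \ref{p_quasi-vertices_conic}. Then Proposition \ref{p_kernel} gives a common kernel $K:=\ker g=\ker h_F(\xi)$ for all rank-$2$ elements $g$ of $L_W$. The rank-$1$ elements also vanish on $K$, since $K=\ker h_F(\xi)$ and $g\xi'=0$ forces $\ker g\supset\ker h_F(\xi')$ for the relevant points.

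So $L_W$ embeds into $\operatorname{Hom}(U,W)$ with $U\cong V/K$ two-dimensional, and the symmetry condition of Proposition \ref{p_equivalent} must cut this $4$-dimensional space down to $2$. I would try the evaluation map $g\mapsto gu_0$ for a general $u_0\in U$ and prove it is injective. Suppose $gu_0=0$. Then for all $u$,
\[\mathcal{H}_F(gu,u_0)=\mathcal{H}_F(u,gu_0)=0,\]
so $gu\in W\cap\ker h_F(u_0)$. This intersection should be $0$ for general $u_0$: otherwise some $w\in W$ pairs to zero with all of $U$ and, being killed by $W$-directions and $K$, would give $\operatorname{rank}_F(w)=0$, i.e. a cone vertex. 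This contradicts $F\in\operatorname{Sym}^d_oV^*$. Making this genericity argument airtight is the step I expect to be hardest. Granting it, $\mathbb{P}L_W$ is a point or a line, so in particular it is irreducible.

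\emph{Step 4: assembling (1) and (2).} Every $g\in N_F^2$ of rank $2$ lies in $L_{\im g}$. Hence
\[N_F^2=\bigcup_{W}\mathbb{P}L_W\ \cup\ \{\text{rank-}1\text{ points}\},\]
a finite union of points and lines. This proves (1).

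For (2), suppose a generated line $\ell=\mathbb{P}W$ exists, and let $[\xi]$ be any quasi-vertex. If $[\xi]\notin\ell$, Proposition \ref{p_quasi-vertex_mult} puts every line joining $[\xi]$ to a point of $\ell$ inside $\Delta_F$. That is infinitely many lines, a contradiction. So $[\xi]$ lies on $\ell$ for every generated line, and the rank-$1$ point lies in every $\mathbb{P}L_W$. Hence $N_F^2=\bigcup_W\mathbb{P}L_W$.

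For distinct $W\ne W'$, the spaces $L_W$ and $L_{W'}$ meet only in rank-$1$ elements. Since $\mathbb{P}L_W$ has dimension at most $1$ and contains a rank-$2$ element that is not in $\mathbb{P}L_{W'}$, neither $\mathbb{P}L_W$ nor $\mathbb{P}L_{W'}$ contains the other. Therefore the irreducible components of $N_F^2$ are exactly the $\mathbb{P}L_W$, and $W\mapsto\mathbb{P}L_W$ gives the claimed bijection.
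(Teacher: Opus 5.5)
Your overall architecture matches the paper's. Your $\mathbb{P}L_W$ is the paper's $B_W$, quasi-vertices are absorbed via Proposition \ref{p_quasi-vertex_mult}, and components are separated by their rank-$2$ images. However, Step 3, which is the heart of the argument, rests on an invalid inference. You argue that if $W\cap\ker h_F(u_0)\neq 0$ for general $u_0$, then a single $w\in W$ pairs to zero with all of $U$. That swaps quantifiers: the nonzero $w(u_0)$ may move with $u_0$. Consider the configuration $\mathcal{H}_F(w_i,u_j)=m_{ij}P$ with $P$ fixed and $(m_{ij})$ a nondegenerate symmetric matrix. There every $u_0$ has such a $w(u_0)$, yet no point of $\mathbb{P}W$ has $F$-rank $0$; all have $F$-rank $1$. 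What excludes this is Proposition \ref{p_quasi-vertices_conic}, not a cone vertex. The repair is one line: apply Proposition \ref{p_r} to your fixed $h_0$, which gives
\[\dim_\mathbb{C}\bigl(W\cap\ker h_F(u_0)\bigr)=\rk h_0-\operatorname{rank}_F(h_0u_0)=2-\operatorname{rank}_F(h_0u_0).\]
You already showed that a point of $F$-rank $2$ exists on $W$. Choose $u_0$ so that $h_0u_0$ is such a point. Then the intersection is $0$, so $g\mapsto gu_0$ embeds $L_W$ into $W$, and $\dim_\mathbb{C}L_W\le 2$. With this, the common kernel $K$ and your unclear claim that rank-$1$ elements vanish on $K$ are unnecessary.

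There are two smaller issues. First, Step 2 is circular in the rank-$2$ case. For $g\in L_W$ you only know that $g$ is nilpotent with image in $W$, which gives $g^3=0$ but not $g^2=0$, and $g^2=0$ is what you are proving. Fix it uniformly: $\im g\subset W=\im h_0\subset\ker h_0$ gives $h_0g=0$, hence $gh_0=0$ by commutativity. So every element of $L_W$ kills $W$, and $gh=0$ for all $g,h\in L_W$. Second, for (1) you must say why there are only finitely many rank-$1$ points. There are at most two, by Lemma \ref{l_at_most_two_quasi-vertices}(1) and Theorem \ref{t_quasi-vertices}(1).

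With these repairs your proof is complete, and your Step 3 is more elementary than the paper's Proposition \ref{p_Gr}. The paper uses a common kernel from Proposition \ref{p_kernel}, an embedding into $2\times 2$ matrices, intersection with the rank-one quadric, and the global bound of two quasi-vertices on $Z(F)$. You only need that the single line $\mathbb{P}W$ carries at most two quasi-vertices. In return, the paper's quadric argument gives the sharper statement that $B_W$ is a point when $\mathbb{P}W$ contains no quasi-vertex.
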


\begin{lemma}\label{l_at_most_two_quasi-vertices}
    \begin{enumerate}
        \item If $Z(F)$ has three distinct quasi-vertices, then $\Delta_F$ contains a plane and therefore infinitely many lines.
        \item Let $g,h\in \symalg^+$ be linearly independent elements of rank $1$. Then a general linear combination of $g$ and $h$ has rank $2$.
    \end{enumerate}
    \begin{proof}
        (1) By Proposition \ref{p_quasi-vertices_conic}, a line in $Z(F)$ contains at most two quasi-vertices. Thus, if $Z(F)$ has more than two quasi-vertices, then by Proposition \ref{p_quasi-vertex_mult}, any three of the quasi-vertices generate a plane contained in $\Delta_F$.

        (2) Write $\im g=\langle gu\rangle$ and $\im h=\langle hv\rangle$. Suppose that $u$ and $v$ are proportional, say $u=v$. Since a linear combination of $g$ and $h$ is of rank at most $1$, for a general $(a:b)\in \mathbb{P}^1$,
        \[[gu], [hu],\text{ and }[(ag+bh)u]\]
        are three distinct quasi-vertices on the line $\mathbb{P}\langle gu,hu\rangle\subset Z(F)$. This contradicts Proposition~\ref{p_quasi-vertices_conic}. Hence $u,v$ are linearly independent. By Theorem \ref{t_quasi-vertices}(1), $\im g\ne \im h$. Hence the image of a general linear combination of $g$ and $h$ is $2$-dimensional.
    \end{proof}
\end{lemma}

We need the following to analyze the irreducible components of $N_F^2$.

\begin{proposition}\label{p_Gr}
    Assume that $\Delta_F$ has only finitely many lines. Consider the map $\Phi_2:\{g\in N_F^2\mid \rk g=2\}\to \operatorname{Gr}(2,V)$ given by $g\mapsto \im g$.
    Let $W\in \operatorname{Gr}(2,V)$.
    \begin{enumerate}
        \item Every element in $\Phi_2^{-1}(W)$ has the same kernel.
        \item We have
        \[\dim_\mathbb{C}\langle \Phi_2^{-1}(W)\rangle\le \begin{cases}
            1, \text{ if $\mathbb{P}W$ does not contain a quasi-vertex of $Z(F)$}\\
            2, \text{ otherwise}.
        \end{cases}\]
    \end{enumerate}
    \begin{proof}
        (1) Let $g,h\in \Phi_2^{-1}(W)$. By Proposition \ref{p_quasi-vertices_conic}, the projective line $\mathbb{P}W\subset Z(F)$ has at most two quasi-vertices. Thus a general point $[\xi]\in \mathbb{P}W$ satisfies $\operatorname{rank}_F(\xi)=2$. By Proposition~\ref{p_kernel}, $\ker g=\ker h$.

        (2) Fix an element $g\in \Phi_2^{-1}(W)$ and let $U=\ker g$. By (1), $U$ is contained in the kernel of every element of $\langle\Phi_2^{-1}(W)\rangle$. Hence every element in $\langle\Phi_2^{-1}(W)\rangle$ factors through $V/U$, and we obtain an embedding 
        \[\iota: \langle\Phi_2^{-1}(W)\rangle\hookrightarrow \operatorname{Hom}_\mathbb{C}(V/U,W)\cong \mathbb{C}^{2\times 2}.\]
        Let $X=\mathbb{P}(\iota\langle\Phi_2^{-1}(W)\rangle)\subset \mathbb{P}(\mathbb{C}^{2\times 2})$. The closed subset $Q\subset \mathbb{P}(\mathbb{C}^{2\times 2})\cong \mathbb{P}^3$ consisting of matrices with rank $1$ is a quadric surface cut out by the determinant. By Lemma \ref{l_at_most_two_quasi-vertices}(1), $Z(F)$ has at most two quasi-vertices. Thus $X\cap Q$ has at most two points, and so $\dim X\le 1$. Hence $\dim_\mathbb{C} \langle \Phi_2^{-1}(W)\rangle\le 2$. 
        
        Suppose that $\mathbb{P}W$ does not contain a quasi-vertex of $Z(F)$. If there is an element $g\in \langle \Phi_2^{-1}(W)\rangle$ of rank $1$, then since $\im g\subset W$, $\mathbb{P}W$ contains the $\mathbb{P}(\im g)$ quasi-vertex of $Z(F)$, which is absurd. Thus every element in $\langle\Phi_2^{-1}(W)\rangle$ has rank $2$. Therefore, $X\cap Q=\varnothing$ and $\dim X\le0$.
    \end{proof}
\end{proposition}

\begin{lemma}\label{l_B_W}
    For each $W\in \operatorname{Gr}(2,V)$, define a closed subset $B_W\subset \mathbb{P}\symalg^+$ by
    \[B_W:=\{g\in N_F^2\mid \im g\subset W\}.\]
    \begin{enumerate}
        \item Each of $B_W$ is a linear subspace of $\mathbb{P}\symalg^+$.
        \item Let $\Phi_2$ be the map defined in Proposition \ref{p_Gr}. If $\Phi_2^{-1}(W)\ne \varnothing$, then $B_W=\mathbb{P}(\langle \Phi_2^{-1}(W)\rangle)$.
    \end{enumerate}
    \begin{proof}
    (1) We first show that $gh=0$ for any $g,h\in B_W$. If $\rk g=1$ then by Proposition~\ref{p_quasi-vertex_stabilized}(2), $gh=0$ and hence every $\mathbb{C}$-linear combination of $g$ and $h$ is square-zero. Assume that $\rk g=\rk h=2$. Then $\im g=W=\im h$. Since $g^2=h^2=0$, we have $gh=0$. Since $\symalg$ is commutative, the affine cone
        \[\widehat{B}_W:=\{g\in \symalg^+\mid g^2=0\text{ and }\im g\subset W\}\subset \symalg^+\]
    over $B_W$ is a subspace of $\symalg^+$. Hence $B_W=\mathbb{P}(\widehat{B}_W)\subset \mathbb{P}\symalg^+$ is a linear subspace.

    (2) By assumption, $B_W$ has an element $\varphi\in B_W$ of rank $2$. Consider the affine cone $\widehat{B}_W\subset \symalg^+$ over $B_W$, which is a vector space by (1). We claim that $\widehat{B}_W=\langle\Phi_2^{-1}(W)\rangle$. If $g\in \widehat{B}_W$ is an element of rank $1$ then a general linear combination of $g$ and $\varphi$ has rank $2$, thus $\widehat{B}_W$ is generated by elements with rank $2$. Since $\im (f+h)\subset W$ for any $f,h\in \widehat{B}_W$, we see that $\widehat{B}_W$ is spanned by the elements of $\widehat{N}_F^2$ whose images are equal to $W$. Hence $\widehat{B}_W=\langle\Phi_2^{-1}(W)\rangle$. 
    \end{proof}
\end{lemma}

    \begin{proof}[Proof of Theorem \ref{t_component}] Assume that $\Delta_F$ has only finitely many lines of $\mathbb{P}V$.
    
         (1) Let $U\in \operatorname{Gr}(2,V)$ and suppose that $B_U$ does not have an element of rank $2$. By Lemma~\ref{l_at_most_two_quasi-vertices}(2), $B_U$ is either empty or consists of one element of rank $1$. We claim that either $N_F^2$ consists of at most one element, or
        \[N_F^2=\bigcup_{B_W \text{ has an element of rank $2$}}B_W.\]
         For this, it is enough to show that, 
         \begin{itemize}
            \item if $g\in N_F^2$ is of rank $1$ and if there is $B_W$ containing an element of rank $2$, then $g\in B_W$, and
            \item if $B_U, B_{U'}$ do not contain an element of rank $2$, then there is $B_W$ containing an element of rank $2$ such that $B_U,B_{U'}\subset B_W$.
        \end{itemize}
        For the first bullet let $\varphi\in B_W$ be an element of rank $2$. If $\im g\not\subset W=\im \varphi$, then since $\mathbb{P}(\im g)$ is a quasi-vertex and since $\mathbb{P}W\subset \Delta_F$, by Proposition~\ref{p_quasi-vertex_mult}, the plane $\mathbb{P}\langle W\oplus \im g\rangle$ belongs to $\Delta_F$, so $\Delta_F$ contains infinitely many lines. This is a contradiction, hence $g\in B_W$. To prove the second bullet, assume that there exist two distinct $B_U$ and $B_{U'}$, neither of which contains an element of rank $2$. Write $B_{U}=\{g\}$ and $B_{U'}=\{g'\}$. Since $g$ and $g'$ are distinct elements with rank $1$, by Lemma \ref{l_at_most_two_quasi-vertices}(2), we can choose a linear combination $h$ of $g$ and $g'$ with $\rk h=2$. Let $W=\im g\oplus \im g'=\im h$. Then $B_U, B_{U'}\subset B_W$. Moreover, since $\rk g=\rk g'=1$, by Proposition \ref{p_quasi-vertex_stabilized}(2), $h^2=0$. Thus, $h\in B_W$.
   
        Next, we show that every $B_W$ containing an element of rank $2$ is a component of $N_F^2$. By Lemma \ref{l_B_W}(1), every $B_W$ is irreducible. Since $\Delta_F$ has only finitely many lines, there are only finitely many $W\in \operatorname{Gr}(2,V)$ such that $B_W$ has an element of rank $2$ (since, if $B_W$ has an element of rank $2$ then $\mathbb{P}W\subset \Delta_F$). Thus the above union is finite. If $W_1\ne W_2$, and $\varphi_1\in B_{W_1}$, $\varphi_2\in B_{W_2}$ are elements of rank $2$, then $\im \varphi_1\ne \im \varphi_2$, so neither $B_{W_1}\subset B_{W_2}$ nor $B_{W_1}\supset B_{W_2}$. Hence we conclude that each of $B_W$ containing an element of rank $2$ is an irreducible component of $N_F^2$. That is, the above union is the (set-theoretic) decomposition of $N_F^2$ into its irreducible components. Combining Lemma \ref{l_B_W}(2) and Proposition \ref{p_Gr}(2) we see that each component $B_W$ of $N_F^2$ is either a projective line or a point.

        (2) Since there is a line generated by a symmetrizer of $F$, $N_F^2$ has an element of rank $2$. By the proof of (1), if $N_F^2$ has an element of rank $2$, then the irreducible components of $N_F^2$ are those $B_W$ containing an element of rank $2$. By definition of $B_W$, the map sending each component $B_W$ to the line $\mathbb{P}W$ generated by a symmetrizer is bijective.
    \end{proof}

We conclude this subsection with an example illustrating the geometry of $N_F^2$.

\begin{example}\label{e_reducible}
    Let $F\in \operatorname{Sym}^d_o(\mathbb{C}^5)^*$ be the symmetric form corresponding to $P = x_0x_2^2 + x_1^2x_2 + x_2x_3^2 + x_4^3 \in \mathbb{C}[x_0,\dots,x_4]$. Then 
    \[\symalg^+=\langle h\rangle\oplus \langle h^2\rangle\oplus \langle g\rangle,\]
    where
    \[h=\left(\begin{array}{ccc|cc}
        0&1&0&&\\
        &0&1&\\
        &&0&\\
        \hline
        &&&&\\
        &&&&
    \end{array}\right) \text{ and } g=\left(\begin{array}{ccc|cc}
        0&0&0&1&0\\
        &0&0&\\
        &&0&\\
        \hline
        &&1&&\\
        &&0&&
    \end{array}\right).\]
    The singular locus $\Delta_F$ of the cubic threefold $Z(F)\subset \mathbb{P}^4$ is the union of two lines meeting at the quasi-vertex $(1:0:0:0:0)$, and these lines are the images of the elements
    \[h+\sqrt{-1}g, \text{ and }h-\sqrt{-1}g\]
    which square to zero. Moreover, $N_F^2$ is the union of two lines in $\mathbb{P}\mathfrak{g}_F^+$:
    \[N_F^2=\mathbb{P}\left(\mathbb{C}h^2\oplus\mathbb{C}\cdot(h+\sqrt{-1}g)\right)\cup\mathbb{P}\left(\mathbb{C}h^2\oplus\mathbb{C}\cdot(h-\sqrt{-1}g)\right).\]
    These two components intersect at the unique point $h^2\in N_F^2$, which corresponds to the quasi-vertex.
\end{example}

\subsection{Proof of Theorem \ref{t_main} and Theorem \ref{t_main_2}}

\begin{proof}[Proof of Theorem \ref{t_main}] 
        Assume that $\Delta_F$ does not contain a line. By Theorem \ref{t_quasi-vertices}, $N_F^2$ consists of at most one point. Let $p$ be the minimal integer such that $\maxid^{p+1}=0$. By Proposition~\ref{p_power_vanishes}, $p\le 2$. If $p\le 1$ then by Proposition \ref{p_minimal_p}, $\dim_\mathbb{C}\maxid\le 1$ and so \[R_F\cong \mathbb{C}\text{ or }\mathbb{C}[x]/(x^2).\] Now if $p=2$ then the strict chain $\maxid\supsetneq \maxid^2\supsetneq \maxid^3=0$ and Proposition \ref{p_power_vanishes}, shows that $\dim_\mathbb{C}\maxid=2$ and that
        \[\dim_\mathbb{C}\maxid/\maxid^2=\dim_\mathbb{C}\maxid^2=1.\]
        Hence in this case,
        \[R_F\cong \mathbb{C}[x]/(x^3).\]
        This proves (1).
        
        By Proposition \ref{p_inequality}, $\dim G_F\le \dim_\mathbb{C}V-\max\{\operatorname{rank}g\mid g\in \mathfrak{g}_F^+\}+\dim_\mathbb{C}\mathfrak{g}_F^+\le \dim_\mathbb{C}V$. This proves (2). The following example shows that the bounds $\dim_\mathbb{C}\maxid\le 2$ and $\dim G_F\le \dim_\mathbb{C}V$ are sharp.
\end{proof}

\begin{example}\label{e_sharp}
    Consider the symmetric $d$-form $F$ corresponding to
    \[P=x_0x_2^{d-1}+(d-1)x_1^{2}x_2^{d-2}+\sum_{i=3}^nx_i^d\in \mathbb{C}[x_0,...,x_n]_d.\]
     As the partial derivatives are linearly independent, the hypersurface $Z(F)\subset \mathbb{P}^n$ is not a cone. The origin $(1:0:\cdots:0)\in Z(F)$ is the unique singularity of multiplicity $d-1$. Moreover, $\symalg$ is the vector space of all matrices of the form
    \begin{align*}
        \left(\begin{array}{ccc|ccc}
            u & s&t& \\
             &u&s \\
             &&u\\
             \hline
             &&&z_3&&\\
             &&&&\ddots&\\
             &&&&&z_n
        \end{array}\right).
    \end{align*} Thus
    \[\symalg^+=\mathbb{C}f\oplus \mathbb{C} f^2, \text{ where } f=\left(\begin{array}{ccc|cc}
        0&1&0&&\\
        &0&1&\\
        &&0&\\
        \hline
        &&&&\\
        &&&&
    \end{array}\right)\] and
    \[R_F=\mathbb{C}\oplus \mathfrak{g}_F^+\cong \mathbb{C}[x]/(x^3).\]
\end{example}

We now turn to the proof of Theorem \ref{t_main_2}. Suppose that $\Delta_F$ has only finitely many lines and that $\maxid^4\ne 0$, where $\maxid=\symalg^+$. In this situation, one can show that $\maxid^4$ is one-dimensional. Consequently, one may consider
\[Q:\maxid^2\times\maxid^2\to \maxid^4\cong \mathbb{C}\]
and use the fact that $\dim N_F^2\le 1$ to obtain the estimate $\dim_\mathbb{C}\maxid\le 5$. The following lemma shows that this estimate can be sharpened to the optimal bound $\dim_\mathbb{C}\maxid\le 4$.

\begin{lemma}\label{l_p=4}
    Assume that $\Delta_F$ has only finitely many lines, and let $\maxid=\symalg^+$. If $\maxid^4\ne 0$ then 
    \[\maxid=\bigoplus_{i=1}^4 \mathbb{C}f^i\]
    for some $f\in \maxid$.
    \begin{proof} 
        We divide the proof into $4$ steps.
    
        \emph{Step 1.} If $g^4=0$ for any $g\in \maxid$, then $\maxid^4=0$ by Lemma \ref{l_power_vanishes}. Thus there is $f\in \maxid$ such that $f^4\ne0$. Consider the chain
        \[\maxid\supsetneq \maxid^2\supsetneq \maxid^3\supsetneq \maxid^4\supsetneq 0.\]
        Since $\mathbb{C}f^3\oplus \mathbb{C}f^4\subset\maxid^3\subset \widehat{N}_F^2$, Theorem \ref{t_component} implies that $\dim \widehat{N}_F^2=2$. Thus
        \[\widehat{N}_F^2=\maxid^3=\mathbb{C}f^3\oplus \mathbb{C}f^4,\]
        and $\maxid^4=\mathbb{C}f^4$.
        \medskip
        
        \emph{Step 2.} We now show that $f$ has a unique nonzero Jordan block. By Proposition \ref{p_Jordan_blocks}, $f$ has at most two nonzero Jordan blocks. Since $f^4\ne0$ and $f^5=0$, the cyclic decomposition of $V$ with respect to $f$ is of the form
        \[V\cong \mathbb{C}[T]/(T^5)\oplus \mathbb{C}[T]/(T^r)\oplus \mathbb{C}^q\]
        where $1\le r\le 5$. Assume that $r> 1$. For $j=3,4$, let $u_j\in V$ be the element corresponding to $T^j\in \mathbb{C}[T]/(T^5)$ and $w\in V$ the element corresponding to $T^{r-1}\in \mathbb{C}[T]/(T^r)$. Then $u_3,u_4,w$ are linearly independent, $u_3\in \ker f^2$, $u_4\in \ker f$, $w\in \ker f$, and in particular,
        \[f^2u_3=f^2u_4=f^2w=0.\] 
        By Proposition \ref{p_multiplicity}, the plane $\mathbb{P}\langle u_3,u_4,w\rangle$ belongs to $\Delta_F$. Since we're assuming that $\Delta_F$ has only finitely many lines, this is a contradiction. Thus $r=1$ and $f$ has exactly one nonzero Jordan block. In particular, $\rk f=4$.
        \medskip

        \emph{Step 3.}  Let $h\in \symalg^+$. Choose a basis $\mathfrak{B}=\{\xi_0,...,\xi_n\}$ of $V$ such that the matrix representation of $f$ with respect to $\mathfrak{B}$ is
        \[[f]_\mathfrak{B}=\left(\begin{array}{c|c}
             J_5(0)\\
             \hline
             &
        \end{array}\right)\in \mathbb{C}^{(n+1)\times (n+1)},\]
        where $J_5(0)\in \mathbb{C}^{5\times 5}$ is the nilpotent matrix of Jordan form with minimal polynomial $x^5$. A direct computation using $fg=gf$ shows that
        \[[h]_\mathfrak{B}=\left(\begin{array}{c|c}
             \begin{array}{ccccc}
                  &  \\
                  &  \\
                  &p(J_5(0))  \\
                  &  \\
                  &
             \end{array}& \begin{array}{ccc}
                 &w^t&  \\
                 \hline
                  & \\
                  &  \\
                  &  \\
                  &
             \end{array} \\
             \hline
            \begin{array}{cccc|c}
                 &&&&  \\
                 &&&&v\\
                 &&&&
            \end{array} & A
        \end{array}\right)\]
        for some polynomial $p=a_1x+\cdots+a_4x^4\in \mathbb{C}[x]$, $v,w \in \mathbb{C}^{n-4}$ and $A\in \mathbb{C}^{(n-4)\times (n-4)}$.
        \medskip

        \emph{Step 4.} We show that $h\in\bigoplus_{i=1}^4\mathbb{C}f^i$. Choose $\epsilon\in \mathbb{C}$ such that $\epsilon +a_1\ne 0$. Then $(\epsilon f+h)^4\ne0$. Applying the previous two steps to $\epsilon f+h$, we see that $\rk(\epsilon f+h)=4$. If $A\ne 0$ then $\rk(\epsilon f+h)>4$. Thus $A=0$. Now choose any $a\in \mathbb{C}$ such that $a^2+w^tv=0$. Then for $\varphi_a:=af^2+(h-p(f))$, we have
        \[[\varphi_a]_\mathfrak{B}=\left(\begin{array}{c|c}
             \begin{array}{ccccc}
                  0&0&a&0&0  \\
                  &0&0&a&0  \\
                  &&0&0&a  \\
                  &&&0&0  \\
                  &&&&0
             \end{array}& \begin{array}{ccc}
                  &w^t& \\
                 \hline
                  & \\
                  &  \\
                  &  \\
                  &
             \end{array} \\
             \hline
            \begin{array}{cccccc|c}
                 &&&&&&  \\
                 &&&&&&v\\
                 &&&&&&
            \end{array} & 
        \end{array}\right),\]
        and $\varphi_a^2=0$. That is, $\varphi_a\in \widehat{N}_F^2=\mathbb{C}f^3\oplus \mathbb{C}f^4$. Thus $v=w=0$, and hence $h\in \bigoplus_{i=1}^4\mathbb{C}f^i$.
    \end{proof}
\end{lemma}
\medskip

    \begin{proof}[Proof of Theorem \ref{t_main_2}]
    Let $p$ be the minimal integer such that $\maxid^{p+1}=0$. By Theorem \ref{t_component}, $\dim N_F^2\le 1$.  By Proposition \ref{p_power_vanishes}, $p\le 4$. By Proposition \ref{p_minimal_p} and Lemma \ref{l_p=4}, we have
        \[\dim_\mathbb{C}\maxid \le \begin{cases}
            0, \text{ if $p=0$}\\
            2, \text{ if $p=1$}\\
            4, \text{ if $p\in \{2,3\}$}\\
            4, \text{ if $p=4$}.
        \end{cases}\] The sharpness of the bound $\dim_\mathbb{C}\maxid\le 4$ is established by Example \ref{e_sharp_2}.
        
        We now classify the possible isomorphism classes of $R_F$. Let $h_i=\dim_\mathbb{C}\maxid^i/\maxid^{i+1}$. Note that
        \begin{align}
            p=0&\implies R_F=\mathbb{C},\\
            p=1\text{ and }\dim_\mathbb{C}\maxid=1&\implies R_F\cong \mathbb{C}[x]/(x^2),\\
            p=1\text{ and }\dim_\mathbb{C}\maxid=2&\implies h_1=2\implies R_F\cong \mathbb{C}[x,y]/(x,y)^2.
        \end{align} Let $p\in \{2,3\}$. Since $\maxid^2\subset \widehat{N}_F^2$, and since $\dim\widehat{N}_F^2\le 2$ (by Theorem \ref{t_component}(1)), we see that $\dim_\mathbb{C}\maxid^2\le 2$.
        
        Suppose that $p=2$. Then $(h_1,h_2)$ is one of $(1,1), (2,1), (3,1) \text{ and } (2,2)$. A direct computation shows that
        \begin{align}
            (h_1,h_2)=(1,1)&\implies R_F\cong \mathbb{C}[x]/(x^3),\\
            (h_1,h_2)=(2,1)&\implies R_F\cong \mathbb{C}[x,y]/((x,y)^3,y^2,xy)\text{ or }\\& \quad\quad\quad\quad\mathbb{C}[x,y]/((x,y)^3,x^2,y^2).
        \end{align}
        Assume $(h_1,h_2)=(2,2)$. By a direct calculation, $R_F$ is isomorphic to either $\mathbb{C}[x,y]/((x,y)^3,y^2)$ or $\mathbb{C}[x,y]/((x,y)^3,xy)$. However, in the first ring $\mathbb{C}[x,y]/(y^2,(x,y)^3)$, the square of every element in the subspace $\langle y,x^2,xy\rangle$ vanishes. Since $\dim \widehat{N}_F^2\le 2$, $R_F$ cannot be isomorphic to this ring. Thus
        \begin{align}(h_1,h_2)=(2,2)\implies R_F\cong \mathbb{C}[x,y]/((x,y)^3,xy)=\mathbb{C}[x,y]/(x^3,y^3,xy).\end{align}
        If $(h_1,h_2)=(3,1)$, then $\dim_\mathbb{C}\maxid=4$ and $\dim_\mathbb{C}\maxid^2=1$, so $N_F^2$ is a quadratic surface in $\mathbb{P}\maxid\cong \mathbb{P}^3$. As $\dim N_F^2\le 1$, this case cannot happen. 
        
        Let $p=3$. Then the conditions $\dim_\mathbb{C}\maxid\le 4$ and $\dim_\mathbb{C}\maxid^2\le 2$ force $(h_1,h_2,h_3)$ to be one of $(1,1,1)$ and $(2,1,1)$. Clearly, 
        \begin{align}(h_1,h_2,h_3)=(1,1,1)\implies R_F\cong \mathbb{C}[x]/(x^4).\end{align} If $(h_1,h_2,h_3)=(2,1,1)$, then we can choose $x,y\in \maxid$ so that
        \[\maxid/\maxid^2=\langle x,y\rangle,\quad \maxid^2/\maxid^3=\langle x^2\rangle\quad \text{and}\quad \maxid^3=\langle x^3\rangle.\]
        Say $xy=\lambda x^2+\mu x^3\in \maxid^2$, where $\lambda,\mu\in \mathbb{C}$. Changing the coordinate $y\mapsto y+\lambda x+\mu x^2$, we may assume that $xy=0$. Write $y^2=ax^2+bx^3\in \maxid^2$ for some $a,b\in \mathbb{C}$. Multiplying both sides by $x$, we have $a=0$, so $y^2=bx^3$. This shows that 
        \[R_F\cong \mathbb{C}[x,y]/(xy,y^2-bx^3, (x,y)^4)=\mathbb{C}[x,y]/(xy,y^2-bx^3,x^4).\]
        If $b=0$, then the square of every element in the $3$-dimensional subspace
        \[\langle x^2,x^3,y\rangle\subset \maxid\]
        vanishes, hence $\dim N_F^2\ge 2$. Since $\dim N_F^2\le 1$, we have $b\ne 0$. Thus,
        \begin{align}(h_1,h_2,h_3)=(2,1,1)\implies R_F\cong \mathbb{C}[x,y]/(xy,y^2-x^3,x^4).\end{align}
        Finally, Lemma \ref{l_p=4} shows that \begin{align}p=4\implies R_F\cong \mathbb{C}[x]/(x^5).\end{align}
        This proves (1). The sharpness of the bound $\dim_\mathbb{C}\maxid\le 4$ is established in Example \ref{e_sharp_2}.
        
        For (2), let $b=\max\{\rk g\mid g\in \symalg^+\}$. By Lemma~\ref{l_power_vanishes}, $\maxid$ has an element such that $f^p\ne 0$. Since $\rk f\ge p$, we see that $b\ge p$. By Proposition \ref{p_inequality},
        \begin{align*}
            \dim G_F\le\dim_\mathbb{C}V-b+\dim_\mathbb{C}\maxid\le \dim_\mathbb{C}V-p+\dim_\mathbb{C}\maxid.
        \end{align*}
        By the classification of $R_F$ in (1), we have
        \begin{align*}
            \dim_\mathbb{C}\maxid-p\begin{cases}
                =2, &\text{if }R_F\cong \mathbb{C}[x,y]/(xy,x^3,y^3) \\
                \le 1,&\text{otherwise}.
            \end{cases}
        \end{align*} Thus if $R_F\not\cong \mathbb{C}[x,y]/(xy,x^3,y^3),$ then $\dim G_F\le \dim_\mathbb{C}V+1$. 
        
        Assume that $R_F\cong \mathbb{C}[x,y]/(xy,x^3,y^3)$. We claim that $b\ge 3$. There is nothing to prove if one of the elements $x,y\in \maxid$ is a symmetrizer of rank $\ge 3$. Suppose that $\rk x,\rk y\le 2$. Then $x^2,y^2\in \maxid^2$ are linearly independent symmetrizers of rank $1$, so they correspond to two distinct quasi-vertices. By Lemma \ref{l_at_most_two_quasi-vertices} and Theorem \ref{t_quasi-vertices}, up to scalar multiples, $x^2,y^2$ are all the elements in $\maxid$ with rank $1$. Thus the nilpotent symmetrizer $(x+y)^2$ has rank at least $2$ and \[b\ge\rk(x+y)\ge 3.\] Since $\dim_\mathbb{C}\maxid=4$, $\dim G_F\le \dim_\mathbb{C}V-3+4=\dim_\mathbb{C}V+1$.
    \end{proof}

\begin{example}\label{e_sharp_2}
    Let $F\in \operatorname{Sym}_o^dV^*$ corresponding to the homogeneous polynomial
    \begin{align*}
        P=x_0x_4^{d-1}+(d-1)x_1x_3x_4^{d-2}+\frac{d-1}{2}x_2^2x_4^{d-2}+\frac{(d-1)(d-2)}{2}x_2x_3^2x_4^{d-3}\\
        +\frac{(d-1)(d-2)(d-3)}{24}x_3^4x_4^{d-4}+(x_5^d+\cdots+x_n^d).
    \end{align*}
    Then $\Delta_F=(x_2=\cdots=x_n=0)\cong \mathbb{P}^1$ and
    \[\symalg^+=\bigoplus_{i=1}^4\mathbb{C}f^i, \text{ where } f=\left(\begin{array}{ccccc|ccc}
            0&1&&&&&&\\
             &0&1&&&\\
             &&0&1&&\\
             &&&0&1&\\
             &&&&0&\\
             \hline
             &&&&&\\
             &&&&&&\\
        \end{array}\right).\] Moreover,
        \[R_F=\mathbb{C}\oplus\mathfrak{g}_F^+\cong \mathbb{C}[x]/(x^5).\]
\end{example}

\begin{remark}
    A direct computation of $N_F^2$ gives the following refinement of Theorem~\ref{t_component}(1). If $\Delta_F$ has only finitely many lines, then for each cases of $R_F$, we have
    \begin{center}\begin{tabular}{c|c}
        $R_F$ & $N_F^2$ \\
         \hline
        $\mathbb{C}$ & $\varnothing$\\
        $\mathbb{C}[x]/(x^2)$& a point \\
        $\mathbb{C}[x]/(x^3)$& a point\\
        $\mathbb{C}[x,y]/(x,y)^2$& $\mathbb{P}^1$ \\
        $\mathbb{C}[x]/(x^4)$& $\mathbb{P}^1$\\
        $\mathbb{C}[x,y]/(xy,x^3,y^2)$& $\mathbb{P}^1$\\
        $\mathbb{C}[x,y]/(x^2,y^2,(x,y)^3)$& two $\mathbb{P}^1$'s meeting at a point\\
        $\mathbb{C}[x]/(x^5)$& $\mathbb{P}^1$\\
        $\mathbb{C}[x,y]/(xy,x^3,y^3)$& $\mathbb{P}^1$\\
        $\mathbb{C}[x,y]/(xy,y^2-x^3,x^4)$& $\mathbb{P}^1$
    \end{tabular}.
    \end{center}
\end{remark}

\section{\bf Corollaries}\label{section_6}

\begin{corollary}\label{c_st}
    Let $P\in \mathbb{C}[x_0,...,x_n]_d$ and $H_P$ its Hessian matrix. The following are equivalent.
    \begin{enumerate}
        \item The polynomial $P$ is of Sebastiani--Thom type (i.e., after a linear change of coordinates, $P$ is written as $P_1+P_2$ for some $P_1\in \mathbb{C}[x_0,...,x_r]_d$ and $P_2\in \mathbb{C}[x_{r+1},...,x_n]_d$ with $0\le r<n$).
        \item There is a matrix $A\in \mathbb{C}^{(n+1)\times (n+1)}$ with at least two distinct eigenvalues, such that $H_P\cdot A$ is symmetric.
    \end{enumerate}
    \begin{proof}
        Let $F\in \operatorname{Sym}^dV^*$ be the symmetric $d$-form corresponding to $P$. By \cite[Theorem 1.6 (i)]{Hwang}, $P$ is of Sebastiani--Thom type if and only if $\symalg^\times\ne0$ if and only if $\symalg$ has an element with at least two distinct eigenvalues. By Proposition \ref{p_equivalent} and Remark~\ref{r_Hessian_matrix}, this is equivalent to the existence of a matrix $A \in \mathbb{C}^{(n+1)\times (n+1)}$ with at least two eigenvalues, such that $H_P\cdot A$ is symmetric.
    \end{proof}
\end{corollary}

\begin{corollary}
    Let $d\ge4$ and $F\in \operatorname{Sym}^d_oV^*$. Assume that $Z(F)$ has only isolated singularities. Then $\dim_\mathbb{C}\symalg^+\le 1$. The equality holds if and only if $Z(F)$ has a quasi-vertex as its unique singular point.
    \begin{proof}
        By Theorem \ref{t_main}, $\dim_\mathbb{C}\symalg^+=\max\{\rk g\mid g\in\symalg^+\}\le 2$. But $\symalg^+$ cannot have an element of rank $2$, since if $f$ is such an element, then $f^3=0$ and by Proposition \ref{p_multiplicity}, every point on the line $\mathbb{P}(\operatorname{im}f)$ is a point of $Z(F)$ with multiplicity $d-2\ge 2$. Thus $\dim_\mathbb{C}\symalg^+\le 1$. The second assertion follows from Theorem \ref{t_quasi-vertices} (2).
    \end{proof}
\end{corollary}

In contrast to the case $d\ge 4$, a cubic hypersurface with an isolated singularity can have $\dim_{\mathbb C}\symalg^+=2$.

\begin{corollary}
    Let $Z(F)\subset \mathbb{P} ^3$ be a cubic surface with isolated singularities. Assume that $Z(F)$ is not a cone.
    \begin{enumerate}
        \item The hypersurface $Z(F)$ has a singularity of type $D_4$ or $D_5$ if and only if $\dim_\mathbb{C}\symalg^+=1$.
        \item The hypersurface $Z(F)$ has a singularity of type $E_6$ if and only if $\dim_\mathbb{C}\symalg^+=2$.
    \end{enumerate}
        \begin{proof}
         For cubic hypersurfaces, the corank of a singular point $[\xi]$ is $n-\operatorname{rank}_F(\xi)$. By \cite[Lemma 4]{BW} and the table of \cite[page 255]{BW} shows that, if $p$ is a singularity on a cubic surface with isolated singularities that is not a cone, then
         \begin{itemize}
             \item the corank of $p$ is $2$ if and only if $p$ is of type $D_4,D_5$ or $E_6$,
             \item $p$ is of $E_6$ type if and only if there is a unique line on the surface passing through $p$.
         \end{itemize}
         Moreover, by the proof of \cite[Lemma 4]{BW}, a cubic surface having an $E_6$ singularity is projectively equivalent to the surface defined by $P=x_0x_2^2+x_1^2x_2+x_3^3$. Thus, if $Z(F)$ has an $E_6$ singularity then by Example \ref{e_sharp}, $\dim_\mathbb{C}\symalg^+=2$. Thus it is enough to prove that if $\dim_\mathbb{C}\symalg^+=2$ then there is a unique line passing through the singularity of $Z(F)$. This follows from the following lemma.
        \end{proof}

    \begin{lemma}\label{l_square_line}
     Let $Z(F)\subset \mathbb{P}^3$ be a surface of degree $d$ that is not a cone. If $[\xi]=\mathbb{P}(\im f^2)$ for some $f\in \symalg^+$ then there is a unique line in $Z(F)$ passing through $[\xi]$.
    \begin{proof}
        By Step 2 of the proof of Theorem \ref{t_main}, there is a basis $\mathfrak{B}=\{\xi_0,\xi_1,\xi_2,\xi_3\}$ of $V$ such that the matrix representation of $f$ with respect to $\mathfrak{B}$ is of the Jordan form
        \[\left(\begin{array}{ccc|c}
            0 &1&0&  \\
             & 0&1&\\
             &&0&\\
             \hline
             &&&0
        \end{array}\right).\]
        By Theorem \ref{t_quasi-vertices}, $[\xi_0]$ is a quasi-vertex, and so $\ker h_F(\xi_0)$ is $3$-dimensional. Let $H$ be the hyperplane section $\mathbb{P}(\ker h_F(\xi_0))\cap Z(F)$. By the proof of Proposition \ref{p_quasi-vertex_lines through x}, the underlying set of $H$ is the locus of lines on $Z(F)$ passing through $[\xi_0]$. So it is enough to prove that $H=dL$ for some line $L$. We claim that this holds for the line $L:=\mathbb{P}(\im f)=\mathbb{P}\langle \xi_0,\xi_1\rangle$.
        
        Note that for $i=0,1,3$,
        \[h_F(\xi_0)(\xi_i)=h_F(f^2\xi_2)(\xi_i)=h_F(x_2)(f^2\xi_i)=0.\]
        As $\operatorname{rank}_F(\xi)=1$, $\ker h_F(\xi_0)=\langle \xi_0,\xi_1,\xi_3\rangle$. Let $G=F|_{\ker h_F(\xi_0)}$ be the restriction. Then $H=Z(G)\subset \mathbb{P}(\ker h_F(\xi_0))$. Let $w_1,...,w_{d-1}\in \ker h_F(\xi_0)$. As each $w_i$ is a linear combination of $\xi_0,\xi_1,\xi_3$, for any $\alpha_0,\alpha_1\in \mathbb{C}$, 
        \[G(\alpha_0\xi_0+\alpha_1\xi_1,w_1,...,w_{d-1})=F(\alpha_0\xi_0+\alpha_1\xi_1,w_1,...,w_{d-1})\] 
        is a linear combination of the terms of the form
        \[F(\xi_i,\xi_j,\xi_k,...), \text{ where } i,j\in \{0,1\} \text{ and } k\in \{0,1,3\}.\]
        Note that $f^3=0$, $\xi_0,\xi_1\in \operatorname{im}f$ and $\xi_3\in \ker f$. Hence all of these terms vanish. That is, for every closed point $z\in L\subset H$, we have $\operatorname{mult}_z(G)=d$. Since $\deg H=d$, we conclude that $H=dL$.
    \end{proof}
\end{lemma}
\end{corollary}

\begin{corollary}
    Let $Z(F) \subset \mathbb{P}^4$ be a cubic threefold with isolated singularities, and assume that $Z(F)$ is not a cone. Then $\symalg^+\ne 0$ if and only if $Z(F)$ contains a singularity of one of the following types: $U_{12}$, $S_{11}$, $T_{444}$, $Q_{10}$, $T_{344}$, $T_{334}$ or $T_{333}$ (for normal forms of these singularities, see p.246 of \cite{AGV12})
    \begin{proof}
        Note that $[\xi]\in Z(F)$ is a quasi-vertex if and only if the corank of $[\xi]$ is $4-\operatorname{rank}_F(\xi)=4-1=3$. By \cite[Appendix D, Table 7]{Viktorova}, $[\xi]\in Z(F)$ has corank $3$ if and only if it is one of the types above. We are done by Theorem \ref{t_quasi-vertices}(2).
    \end{proof}
\end{corollary}

\end{document}